
\documentclass[12pt]{article}%
\usepackage[utf8]{inputenc}
\usepackage[all]{xy}

\usepackage{amsmath}
\usepackage{amsfonts}
\usepackage{amssymb}
\usepackage{graphicx}
\setcounter{MaxMatrixCols}{30}
\providecommand{\U}[1]{\protect\rule{.1in}{.1in}}

\newcommand{\N}{{\mathbb N}}
\newcommand{\C}{{\mathbb C}}

\newcommand{\R}{{\mathbb R}}
\newcommand{\Z}{{\mathbb Z}}

\newcommand{\T}{{\mathbb T}}

\newcommand{\Ho}{{\mathbb H}}

\newcommand{\Hh}{{\mathcal H}}
\newcommand{\Aa}{{\mathcal A}}
\newcommand{\Cc}{{\mathcal C}}

\newcommand{\Ss}{{\mathcal S}}

\def\ov{\overline}

\def\wh{\widehat}
\def\bi{\begin{itemize}}
\def\ei{\end{itemize}}
\def\un{\underline}

\newcommand{\ba}{\begin{eqnarray}}
\newcommand{\ea}{\end{eqnarray}}
\newcommand{\bas}{\begin{eqnarray*}}
\newcommand{\eas}{\end{eqnarray*}}
\newcommand{\be}{\begin{equation}}
\newcommand{\ee}{\end{equation}}

\def\dxj{\partial_{x_j}}
\def\dx0{\partial_{x_0}}

\textwidth 6.5in
\textheight 9.2in
\evensidemargin 0in
\oddsidemargin 0in
\topmargin -0.2in
\headsep 0in
\headheight 0in
\footskip .5in
\tolerance=200
\setlength{\emergencystretch}{2em}

\newtheorem{theorem}{Theorem}
\newtheorem{proposition}[theorem]{Proposition}
\newtheorem{corollary}[theorem]{Corollary}
\newtheorem{lemma}[theorem]{Lemma}

\newenvironment{proof}[1][Proof]{\noindent\textbf{#1.} }{\ \rule{0.5em}{0.5em}}
\newtheorem{preremark}[theorem]{Remark}
\newenvironment{remark}{\begin{preremark}\rm}{\hfill$\Diamond$\end{preremark}}
\newtheorem{prenotation}[theorem]{Notation}

\numberwithin{equation}{section}
\numberwithin{theorem}{section}
\begin{document}

\title{{Coherent State Transforms and the \break Weyl Equation in Clifford Analysis}}
\author{Jos\'e  Mour\~ao\thanks{Department of Mathematics and Center for Mathematical Analysis, Geometry and Dynamical Systems, Instituto Superior T\'ecnico, University of  Lisbon.}, \, Jo\~ao P. Nunes\footnotemark[1] \, and Tao Qian\thanks{Department of Mathematics, Faculty of Science and Technology, University of  Macau.}}

\maketitle

\bigskip

\begin{abstract}

We study a transform, inspired by coherent state transforms, from the Hilbert space of Clifford algebra 
valued square 
integrable functions 
$L^2(\R^m,dx)\otimes \C_{m}$ to a Hilbert space of solutions of the Weyl equation on $\R^{m+1}= \R\times \R^m$, namely to the Hilbert space ${\mathcal M}L^2(\R^{m+1},d\mu)$  of $\C_m$-valued monogenic functions on 
$\R^{m+1}$ which are 
$L^2$ with respect to an appropriate measure $d\mu$. We prove that this transform is a unitary isomorphism of Hilbert spaces and that it is therefore an analog of the Segal-Bargmann transform for Clifford analysis. As a corollary 
we obtain an orthonormal basis of monogenic functions on $\R^{m+1}$. 
We also study the case when $\R^m$ is replaced by the $m$-torus $\T^m.$  
Quantum mechanically, this extension establishes the unitary equivalence of the Schr\"odinger 
representation on $M$, for $M=\R^m$ and $M=\T^m$, with a representation on the Hilbert space ${\mathcal M}L^2(\R\times M,d\mu)$ of
solutions of the Weyl equation on the space-time $\R\times M$.

\vskip 0.2cm

\noindent Keywords: Mathematical physics; Coherent state transforms; Clifford analysis.

\end{abstract}

\tableofcontents

\section{Introduction}

In this work we continue to explore the extensions of coherent state transforms to the context of Clifford 
analysis initiated in \cite{KMNQ}.

Clifford analysis (see \cite{BDS, DSS}) extends the theory of complex analysis of holomorphic
functions to functions of Clifford algebra variables, obeying  generalized Cauchy-Riemann conditions and called 
monogenic functions. In the context of the present paper, monogenic functions correspond to solutions 
of the Weyl equation in the euclidean space-time $\R\times M$, for $M=\R^m$ or 
$M=\T^n$.
In quantum physics, Clifford algebra or spinor representation valued
functions describe some systems with internal degrees of freedom, such as particles with spin. Notice that
spinor valued solutions of the Dirac equation can be described by Clifford algebra valued solutions of the 
same equation, by decomposing the algebra in a sum of minimal left ideals. 
(See, for example, Chapter 2 of \cite{DSS}.)

On the other hand, the Segal-Bargmann transform \cite{Ba, Se1, Se2}, for a particle on $\R^m$, establishes
the unitary equivalence of the Schr\"odinger representation with Hilbert space $L^2(\R^m, dx)$, with
(Fock space-like) representations with Hilbert spaces, ${\mathcal H}L^2(\C^m, d\nu)$, of holomorphic functions
on the phase space, $\R^{2m}\cong \C^m$ which are $L^2$ with respect to an appropriate measure $d\nu$. 
In the Schr\"odinger representation, the position operators $\hat x_j, j=1,\dots, m,$ act diagonally while the momentum
operator $\hat p_j= -i \frac{\partial}{\partial x_j}$ . In the Segal-Bargmann representation, on the other hand, 
it is the operators $\widehat{x_j+ ip}_j$ that act on the Hilbert space ${\mathcal H}L^2(\C^m, d\nu)$ 
as multiplication by the holomorphic functions $x_j + ip_j, j=1,\dots, m$.
In \cite{Ha1}, Hall has defined coherent state transforms (CSTs) for compact Lie groups $G$
which are analogs of the Segal-Bargmann transform. 

Let $\R_m$ (respectively $\C_m$) be the real (complex) Clifford algebra with $m$ generators, see section \ref{ss-ca}).
In \cite{KMNQ}, we presented a generalization of the Segal-Bargmann transform to a transform taking functions in 
$L^2(\R, dx)$ to Hilbert spaces of slice or axial monogenic Clifford algebra valued functions on $\R^{m+1}$. The unitarity 
of these transforms, with respect to appropriate measures, was established.  See also \cite{DG}, where a 
similar transform (for $m=2$) was studied, but with range a Hilbert space 
of slice monogenic functions on the full quaternionic algebra $\R_2={\mathbb H}$. 

In the present work, we give a different generalization of the Segal-Bargmann transform. Instead of going from 
functions on $\R$ to functions on $\R^{m+1}\subset \R_m$, as in \cite{KMNQ}, this transform adds a single time variable 
to functions on $\R^m$ and maps to solutions of the Weyl (or Cauchy-Riemann) equation on $\R^{m+1}$. Namely, the transforms studied in sections \ref{s-3} and \ref{s-4} give a unitary 
equivalence between Schr\"odinger quantization on $L^2(M,d\un x)\otimes \C_{m}$
and a Hilbert space of solutions of the Weyl equation in the euclidean space-time $\R\times M$.

\section{Preliminaries}
\label{s-2}

\subsection{Coherent state transforms (CST)}
\label{cst}

In  \cite{Ha1}, Hall  introduced  a class of unitary integral transforms 
from the Hilbert spaces of square integrable functions on compact 
Lie groups $G$, with respect to the Haar measure, to spaces of holomorphic functions on the complexification 
$G_\C$, which are $L^2$ with respect to an appropriate measure.
 These are known as coherent state transforms (CSTs) 
or generalized Segal--Bargmann transforms. 
These transforms were extended to Lie  groups of compact type, which include the case of
$G=\R^m$ considered in the present paper, by Driver in \cite{Dr}. General Lie
groups of compact type are products of compact Lie groups and $\R^m$, see Corollary 2.2 of \cite{Dr}.

We will briefly recall now
the case $G=\R^m$
for which the Hall transform coincides with the classical
Segal--Bargmann transform \cite{Ba, Se1, Se2}.
 
Let $\rho_t(x)$  denote the fundamental solution of the
heat equation.
$$
\frac{\partial}{\partial t} \rho_t =  \frac 12 \Delta \, \rho_t ,
$$
i.e.
$$
\rho_t(x) = \frac{1}{(2\pi t)^{m/2}} \, e^{-\frac{|x|^2}{2t}},
$$
where $\Delta$ is the Laplacian for the euclidean metric.
The Segal--Bargman or coherent state transform
$$
U \ : L^2(\R^m, dx) \longrightarrow \Hh(\C^m)
$$
is defined by
\ba
\label{ee-cst} \nonumber
U(f) (z) &=& \int_{\R^m} \, \rho_{t=1}(z - x) f(x) \, dx = \\
&=& \frac{1}{(2\pi )^{m/2}} \, \int_{\R^m}  \, e^{-\frac{|z-x|^2}{2}} \, f(x)
\, dx \, .
\ea
where $\rho_1$ has been analytically continued to $\C^m$.
We see that the transform $U$ in (\ref{ee-cst}) factorizes according
to the following diagram

\vskip 0.3cm

\begin{align}
 \label{d1}
\begin{gathered}
\xymatrix{
&&{\mathcal H} (\C^m)  \\
L^2(\R^m, dx)  \ar@{^{(}->}[rr]_{e^{\frac{\Delta}2}}  \ar@{^{(}->}[rru]^{U} && \Aa (\R^m)
\ar[u]_{\Cc}
  }
\end{gathered}
\end{align}

\noindent where $\Cc$ denotes the  analytic continuation from $\R^m$ to $\C^m$
and $e^{\frac{\Delta}2}(f)$ is the (real analytic) heat kernel evolution of the function $f\in L^2(\R^m,dx)$ at time $t=1$,
that is the solution of
\be
\label{ee-he}
\left\{
\begin{array}{lll}
\frac{\partial}{\partial t} \, h_t &=& \frac 12 \Delta \, h_t\\
h_0 &=& f
\end{array}   ,
\right.
\ee
evaluated at time $t=1$,
$$
e^{\frac{\Delta}2}(f) = h_1.
$$
$\Aa(\R^m)$ in (\ref{d1}) is the space of (complex valued) 
real analytic functions on $\R^m$ with unique analytic continuation to entire functions 
on $\C^m$.
Let $\widetilde \Aa (\R^m)\subset \Aa(\R^m)$  denote the image of $L^2(\R^m,dx)$  by the operator
$e^{\frac{\Delta}2}$. The analytic continuation ${\mathcal C}$ on 
$\widetilde \Aa (\R^m)$
can be writen in the 
form
\be
\label{ee-aco}
{\mathcal C} (f) (x, y) = f(x + i y) =
e^{i \sum_{j=1}^m \, y_j \, \partial_{x_j}} \, (f(x))    \, .
\ee
Then, the Segal--Bargmann theorem reads as follows.

\begin{theorem} 
\label{th-cst}
The transform $U$ in the diagram
\begin{align}
 \label{d2}
\begin{gathered}
\xymatrix{
&&  \Hh L^2(\C^m, \nu \,  dx dy)   \\
L^2(\R^m, dx)  \ar[rr]_{e^{\frac{\Delta}2}}  \ar[rru]^{U} &&
\widetilde \Aa (\R^m)
\ar[u]_{\, \, e^{i \sum_{j=1}^m \, y_j \, \partial_{x_j}}}
  }
\end{gathered}
\end{align}
 is a unitary isomorphism, where
$\nu(y) = e^{-|y|^2}$.
\end{theorem}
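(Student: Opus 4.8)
The plan is to reduce the statement to the one-dimensional case $m=1$ and then establish, separately, that $U$ is isometric and that it is surjective onto $\Hh L^2(\C, \nu\, dx\, dy)$. For the reduction, observe that the Gaussian kernel $e^{-|z-x|^2/2}$ factors as a product $\prod_{j=1}^m e^{-(z_j-x_j)^2/2}$ over the coordinates, that the weight $\nu(y) = e^{-|y|^2} = \prod_{j=1}^m e^{-y_j^2}$ factors likewise, and that $dx\, dy$ is a product measure; hence $U = U_1 \otimes \cdots \otimes U_1$ is an $m$-fold tensor product of the one-dimensional transform, and unitarity of $U$ follows from unitarity of each factor. Thus it suffices to treat $m=1$.

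\emph{Well-definedness.} First I would check that $Uf$ is genuinely an element of $\Hh L^2$, i.e.\ that it is entire and square-integrable against $\nu\, dx\, dy$. Entirety follows either from the factorization (\ref{d1}), writing $Uf = \Cc(e^{\frac{\Delta}{2}} f)$ as the analytic continuation of a heat evolution, or directly by differentiating under the integral sign: the kernel $e^{-(z-x)^2/2}$ is entire in $z$ and admits a local dominating function in $z$, so Morera's theorem applies. The $L^2$ bound is then subsumed in the isometry computation below.

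\emph{Isometry (the main step).} For $f \in L^2(\R, dx)$, write $F = Uf$ and compute $\|F\|^2 = \int_\R\int_\R |F(x+iy)|^2\, e^{-y^2}\, dx\, dy$ by expanding $|F|^2$ as a double integral over the source variables $s,t$. Expanding the product of the kernel and its conjugate and simplifying the exponent produces a factor $e^{+y^2}$ that exactly cancels the weight $e^{-y^2}$, leaving an oscillatory factor $e^{-iy(t-s)}$ together with a Gaussian $e^{-\frac12((x-s)^2+(x-t)^2)}$ in the remaining variables. Performing the $y$-integral first yields $\int_\R e^{-iy(t-s)}\, dy = 2\pi\,\delta(t-s)$ (Fourier inversion), which collapses the double integral to the diagonal $s=t$; the remaining Gaussian integral in $x$ contributes a constant, leaving $\|F\|^2 = c\,\|f\|^2$ for an explicit $c$ (equal to $\pi^{1/2}$ per dimension, to be absorbed into the normalization of $\nu$). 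The hard part will be justifying rigorously the interchange of the order of integration and the Fourier-inversion/delta-function step, which is only formal as written. I would handle this by first carrying out the computation on a dense subspace of well-behaved functions — for instance Schwartz functions, or finite linear combinations of Hermite functions — where Fubini and Plancherel apply without difficulty, and then extending the isometry to all of $L^2(\R, dx)$ by continuity and density.

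\emph{Surjectivity.} Since an isometry between Hilbert spaces has closed range, it remains to show that the range is dense in $\Hh L^2(\C, \nu\, dx\, dy)$. For this I would verify that $U$ carries the orthonormal basis of Hermite functions $\{h_n\}_{n\geq 0}$ of $L^2(\R, dx)$ to constant multiples of the monomials $z^n$ — most cleanly by inserting the generating function of the Hermite polynomials into the integral defining $U$ — and then invoke the density of polynomials in the weighted Bergman-type space $\Hh L^2$, where every element admits a norm-convergent power-series expansion. Because $U$ is isometric, the images $\{Uh_n\}$ form an orthonormal system whose closed span is all of $\Hh L^2$, so $U$ is onto. Combining the isometry with surjectivity shows that $U$ is a unitary isomorphism, and the tensor-product reduction lifts the conclusion to arbitrary $m$.
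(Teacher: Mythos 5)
First, a caveat: the paper does not actually prove Theorem \ref{th-cst}; it is quoted as the classical Segal--Bargmann theorem, so the only internal point of comparison is the paper's detailed proof of the monogenic analogue, Theorem \ref{bigT}. Your tensor-product reduction and your isometry argument are sound: the Plancherel/delta-function computation in the $y$-variable is exactly the rigorous route (for fixed $x$, $F(x+iy)e^{-y^2/2}$ is the Fourier transform of $u\mapsto e^{-u^2/2}f(x-u)$, so Plancherel in $y$ does the whole job, even directly on $L^2$), and it parallels the computation in Lemma \ref{l4}. Your observation that a factor of $\pi^{1/2}$ per dimension survives is also correct — as stated, with $\nu(y)=e^{-|y|^2}$ and no $\pi^{-m/2}$, the map is unitary only up to that constant (compare the explicit $\frac{1}{\sqrt{\pi}}$ in the measure $d\mu$ of Theorem \ref{bigT}).

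The genuine gap is in the surjectivity step. You claim that $U$ carries the Hermite functions to constant multiples of the monomials $z^n$ and then invoke density of polynomials in $\Hh L^2(\C^m,\nu\,dx\,dy)$. Both halves fail for this version of the transform. Since $U=\Cc\circ e^{\Delta/2}$ and (Lemma \ref{l2}) $e^{\Delta/2}\bigl(H_k(x)e^{-x^2/2}\bigr)=2^{-m/2}x^k e^{-x^2/4}$, the images are $c_k\,z^k e^{-z^2/4}$, not monomials. More seriously, the monomials $z^n$ do not even belong to $\Hh L^2(\C^m, e^{-|y|^2}dx\,dy)$: the weight is Gaussian only in $y$ and Lebesgue in $x$, so $\int_{\R}|x+iy|^{2n}\,dx=\infty$. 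You appear to be conflating the heat-kernel Segal--Bargmann space with weight $e^{-|y|^2}$ with the classical Fock space with weight $e^{-|z|^2}$, where monomials are dense and where the (differently normalized) Bargmann transform does send Hermite functions to monomials; in the present space elements have no norm-convergent power-series expansion. The argument can be repaired, but it requires showing that the functions $z^k e^{-z^2/4}$ span a dense subspace of $\Hh L^2(\C^m,\nu\,dx\,dy)$ — for instance by restricting an arbitrary $F$ in the space to $\R^m$, expanding $F(x)e^{x^2/4}$ in a Taylor series, and using closedness of the range of the isometry to conclude $F$ lies in the image, which is precisely the strategy the paper uses for the monogenic case in the proof of Theorem \ref{bigT}; alternatively one can argue via the reproducing kernel. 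As written, the surjectivity step does not go through.
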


\subsection{Clifford analysis}
\label{ss-ca}

 Let us briefly recall from \cite{BDS, DSS, CSS1, CSS2, CSS3, DS, LMQ, Q1, Q2, Sou},  some definitions and results from  Clifford analysis.
 Let $\R_{m+1}$ denote the real Clifford algebra with $(m+1)$ generators, $\tilde e_j, j = 0, \dots, m$, identified
with the canonical basis of $\R^{m+1} \subset \R_{m+1}$ and satisfying the relations 
$\tilde e_i \tilde e_j + \tilde e_j \tilde e_i = - 2 \delta_{ij}$. Let $\C_{m+1}=\R_{m+1}\otimes \C$.
We have that $\R_{m+1} = \oplus_{k=0}^{m+1} \R_{m+1}^k$,
where $ \R_{m+1}^k$ denotes the space of $k$-vectors, defined by $\R_{m+1}^0 = \R$ and  $\R_{m+1}^k 
= {\rm span}_\R \{\tilde e_A \, : \, A \subset \{0, \dots , m\}, |A| = k\}$.  

Notice also that $\R_1 \cong \C$ and $\R_2 \cong \Ho$.
The inner product in $\R_{m+1}$ is defined by
$$
(u, v) = \left(\sum_A u_A \tilde e_A ,  \sum_B v_B \tilde e_B\right) = \sum_A u_A v_A .
$$
The Dirac operator is defined as
$$
\tilde D = \sum_{j=0}^m \, \dxj \tilde e_j.
$$
Let 
$$
e_j = -\tilde e_0 \tilde e_j, \, j=1, \dots, m.
$$
Note that $e_ie_j+e_je_i=-2\delta_{ij}$, so that $\{e_j\}_{j=1, \dots ,m}$ is a 
set of generators for a subalgebra $\C_{m+1}^+\subset \C_{m+1}$ with $\C_{m+1}^+ \cong \C_m$. We will henceforth 
consider $\C_m$-valued functions.

One defines the Cauchy-Riemann operator by
\be
\nonumber
D  = \dx0 + \un D ,
\ee
where
$$
\un D = \sum_{j=1}^m \, \dxj  e_j.
$$
We have that $\un D^2 = - \Delta_m$ and
$D \ov D  = \Delta_{m+1}$.

Consider the subspace of $\R_{m}$ of $1$-vectors  
$$
\{\un x = \sum_{j=1}^{m} x_j e_j: \,\, x=(x_1,\dots,x_m)\in \R^m\}\cong \R^m,
$$
which we identify with $\R^{m}$. Note that $\un x^2 = - |\un x|^2 = - (x, x).$

Recall that a continuously  differentiable function $f$ on an open domain ${\cal O} \subset \R^{m+1}$,  
with values on $\C_{m}$, 
is called (left) monogenic on ${\cal O}$  if it satisfies the Weyl, or Cauchy-Riemann, equation (see, for example, \cite{BDS,DSS,Sou})
\be\nonumber
D f(x_0, \un x)  = (\dx0 + \un D) f(x_0, \un x) = 0.
\ee
For $m=1$, monogenic functions on $\R^2$ correspond to holomorphic
functions of the complex variable $x_0+e_1 x_1$.


In order to describe monogenic functions 
let us, following \cite{LMQ, Q2}, introduce the following projectors
$$\chi_\pm(\un p) = \frac12 \left(1\pm \frac{i\un p}{|\un p|}\right),$$ 
satisfying
$$
Q (i \un p) \, \chi_\pm(\un p) = \chi_\pm(\un p) \, Q (i \un p)
= Q (\pm |\un p|) \, \chi_\pm(\un p)
$$
for any polynomial in one variable with complex coefficients, 
$$
Q(\lambda) = \sum_{k=0}^\ell \, c_k \lambda^k.
$$
For any function $B$, of one real variable, one naturally defines the following Clifford 
algebra valued function on $\R^m$,
$$
b(p_1,\dots,p_m) := B(|\un p|)\chi_+(\un p) + B(-|\un p|)\chi_-(\un p). 
$$
By abuse of notation, using the analogy for the case when $B$ is a polynomial, we will 
still denote the right-hand side by $B(i\un p)$. Then, for  $B_{x_0}(y)= e^{-x_0y}$, the Clifford algebra 
valued function
\be\label{be}
e^{-ix_o\un p}=B_{x_0}(i\un p)=b_{x_0}(p_1,\dots ,p_m) = e^{-x_0|\un p|}\chi_+(\un p) + e^{x_0|\un p|}\chi_-(\un p),
\ee
satisfies the equation
$$
\frac{\partial}{\partial x_0} e^{-ix_o\un p} = -i\un p\, e^{-ix_o\un p}, 
$$
which implies that the functions
$$
e(x, \un p) = e^{i ((\un x,  \un p) - x_0 \un p)}
$$
are monogenic in $(x_0,\un x)$ for all $\un p \in \R^m$. In fact, this is the Cauchy-Kowalevski extension
$$
e(x,\un p)=e^{-x_0 \un D} \,e^{i(\un p, \un x)} = e^{i(\un p, \un x)} \,e^{i x_0\un p},
$$
which follows straightforwardly from 
$$
\un D e^{i(\un p, \un x)} = i e^{i(\un p, \un x)} \un p.
$$

These functions play a very important role
in the analysis of monogenic functions. We then obtain the following corollary of \cite{LMQ,Q2}.
Let $\Ss(\R^m)$ be the space of Schwarz functions on $\R^m$. 

\begin{proposition}\label{pu2}
Let $b_{x_0}$ be as in (\ref{be}) and let $f\in \Ss(\R^m)\otimes \C_{m}$ be such that for all $x_0\in \R$ 
\be\label{condition}
b_{x_0}\,\hat f \in \Ss(\R^m)\otimes \C_{m},
\ee
where $\hat f$ denotes the Fourier transform of $f$.
Then, 
\be\label{ft}
F(x_0,\un x)=\frac 1{(2\pi)^{m/2}} \, \int_{\R^m} \, e^{i ((\un p, \un x)-x_0\un p)} \, \wh f(\un p) \, d\un p,
\ee
defines a monogenic function satisfying the Cauchy problem 
\ba\label{supermonogenic}
&& \left\{
\begin{array}{rcl}
\frac{\partial F}{\partial x_0} &=& -  \un D \,F  \\
F(0, \un x) & = & f(\un x) \, . 
\end{array}  
  \right. 
\ea
\end{proposition}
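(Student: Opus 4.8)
The plan is to regard $F$ as a continuous superposition of the elementary monogenic functions $e(x,\un p)=e^{i(\un p,\un x)}\,b_{x_0}(\un p)$ introduced above, and to transport both the field equation and the initial condition from the kernel $e(x,\un p)$ to $F$ by passing the operator $D$ and the evaluation $x_0=0$ through the integral. Writing $c=(2\pi)^{-m/2}$, so that $F(x_0,\un x)=c\int_{\R^m} e(x,\un p)\,\wh f(\un p)\,d\un p$, there are three points to check: that this integral (and the ones obtained after differentiating) converge, that $D F=0$ (equivalently $\dx0 F=-\un D\,F$), and that $F(0,\un x)=f(\un x)$.

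First I would dispose of the one genuinely delicate point, namely justifying differentiation under the integral sign; this is exactly the role of hypothesis (\ref{condition}). For fixed $x_0\neq 0$ the factor $b_{x_0}(\un p)=e^{-x_0|\un p|}\chi_+(\un p)+e^{x_0|\un p|}\chi_-(\un p)$ grows exponentially in $|\un p|$ through one of the two projector terms, so $b_{x_0}\wh f$ is not integrable for a generic $f\in\Ss(\R^m)\otimes\C_m$; the assumption that $b_{x_0}\wh f\in\Ss(\R^m)\otimes\C_m$ for every $x_0$ is precisely what lets the decay of $\wh f$ absorb this growth. Differentiating the integrand produces only polynomial factors in $\un p$, namely $\dx0\big[e(x,\un p)\wh f(\un p)\big]=-e^{i(\un p,\un x)}(i\un p)\,b_{x_0}(\un p)\wh f(\un p)$ and $\dxj\big[e(x,\un p)\wh f(\un p)\big]=i p_j\,e(x,\un p)\wh f(\un p)$, and multiplication by a polynomial preserves the Schwartz class, so each differentiated integrand stays integrable. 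Since $|b_{x_0}(\un p)|\lesssim e^{|x_0|\,|\un p|}$, these bounds can be made locally uniform in $x_0$ by dominating $b_{x_0}$ on a small interval $[x_0^*-\delta,x_0^*+\delta]$ by the Schwartz functions $b_{\pm(|x_0^*|+\delta)}\wh f$; dominated convergence then permits moving $\dx0$ and each $\dxj$, hence $D$, inside the integral. This simultaneously shows that $F$ is smooth, so that calling it monogenic is meaningful. I expect this integrability and interchange step to be the main obstacle, everything else being essentially formal.

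Granting the interchange, monogenicity is immediate: each $e(x,\un p)$ is monogenic in $(x_0,\un x)$, and because the constant generators $e_j$ in $\un D$ act by left multiplication while $\wh f(\un p)$ stands to the right of $e(x,\un p)$, the two do not interfere and $D F=c\int_{\R^m}\big(D_x\,e(x,\un p)\big)\wh f(\un p)\,d\un p=0$; rewriting $DF=0$ gives the evolution equation $\dx0 F=-\un D\,F$. For the initial condition I would evaluate at $x_0=0$, where $b_0(\un p)=\chi_+(\un p)+\chi_-(\un p)=1$ and hence $e(x,\un p)=e^{i(\un p,\un x)}$; then $F(0,\un x)=c\int_{\R^m} e^{i(\un p,\un x)}\wh f(\un p)\,d\un p=f(\un x)$ by the Fourier inversion theorem, which applies since $f\in\Ss(\R^m)\otimes\C_m$. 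This closes the argument.
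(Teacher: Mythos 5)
Your proposal is correct and follows essentially the same route as the paper's own (much terser) proof: both express $F$ as a superposition of the monogenic plane waves $e(x,\un p)$, invoke hypothesis (\ref{condition}) to justify differentiation under the integral sign, and obtain the initial condition from Fourier inversion since $b_0(\un p)=1$. Your write-up simply supplies the domination details that the paper leaves implicit.
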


\begin{proof}
We have 
$$
f(\un x)= \frac 1{(2\pi)^{m/2}} \, \int_{\R^m} \, e^{i (\un p, \un x)} \, \wh f(\un p) \, d\un p=F(0,\un x).
$$ 
From above, $e^{i ((\un p, \un x)-x_0\un p)}$ are monogenic functions, for all $p\in \R^m$, and under the conditions of the proposition, we can differentiate under the integral sign in (\ref{ft}) so that $F$ is also a monogenic function.
\end{proof}

Following \cite{DS, DSS} (see \cite{DSS}, chapter III, Section 2 and Theorem 6 in \cite{Som}),  one also has

\begin{proposition}\label{initial}
Let $F$ be a monogenic function on $\R^{m+1}$, with $F(x_0=0,\un x)=f(\un x)$. Then, 
\be\label{ee-cke}
F(x_0,\un x) = \left(e^{-x_0\un D} f\right)(x_0,\un x):=
\left(\sum_{k=0}^\infty \, (-1)^k \frac{x_0^k}{k!} \un D^k f\right)(\un x),
\ee
where the series converges uniformly on compact subsets.
\end{proposition}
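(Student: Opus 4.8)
The plan is to exploit the Weyl equation to identify the successive $x_0$-derivatives of $F$ at $x_0=0$ with powers of $-\un D$ applied to the initial datum $f$, and then to recognise the series in (\ref{ee-cke}) as the Taylor expansion of $F$ in the time variable $x_0$. First I would rewrite monogenicity $DF=0$ as $\dx0 F = -\un D F$. Since $\dx0$ commutes with $\un D$ (the latter has constant Clifford-algebra coefficients and acts only in the $\un x$ variables), I can differentiate this identity in $x_0$ and substitute repeatedly to obtain $\dx0^k F = (-\un D)^k F$ for every $k\geq 0$, as an identity of smooth functions on $\R^{m+1}$. Evaluating at $x_0=0$ and using $F(0,\un x)=f(\un x)$ gives $\dx0^k F(0,\un x) = (-1)^k \un D^k f(\un x)$, which are exactly the coefficients appearing in the asserted expansion.

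Next I would justify that $F$ is reproduced by its Taylor series in $x_0$. Because $\ov D D = \dx0^2 - \un D^2 = \dx0^2 + \Delta_m = \Delta_{m+1}$, monogenicity forces $\Delta_{m+1}F = \ov D(DF) = 0$, so every scalar component of $F$ is harmonic and hence real analytic on $\R^{m+1}$. Fixing $\un x$ and expanding the real-analytic map $x_0 \mapsto F(x_0,\un x)$ about $x_0=0$ then yields $F(x_0,\un x) = \sum_{k\geq 0} \frac{x_0^k}{k!}\,\dx0^k F(0,\un x) = \sum_{k\geq 0} (-1)^k \frac{x_0^k}{k!}\, \un D^k f(\un x)$, which is precisely (\ref{ee-cke}).

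The main work, and the step I expect to be the real obstacle, is upgrading pointwise Taylor convergence to uniform convergence on compact subsets. Here I would invoke the Cauchy-type estimates available for harmonic, and therefore real-analytic, functions: on any compact $K\subset\R^{m+1}$ one controls the derivatives by a bound of the form $\sup_K |\dx0^k F| \leq C\, k!\, R^{-k}\, \sup_{K'} |F|$, for a slightly enlarged compact $K'$ and some $R>0$ that can be chosen \emph{independent} of $k$. Such estimates furnish a lower bound on the radius of convergence that is uniform over $K$ and dominate the tails of the series, yielding the claimed uniform convergence on compacts. The delicate point is precisely the uniformity of $R$ over $K$, which is what prevents a naive term-by-term argument from immediately succeeding and which the harmonicity of the components is needed to secure.

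Alternatively, one may bypass the convergence analysis by appealing directly to the Cauchy--Kovalevskaya theorem for the constant-coefficient system $\dx0 F = -\un D F$ with analytic Cauchy data $f$: this produces a unique analytic solution on a neighbourhood of $\{x_0=0\}$, represented by the same majorised power series, and comparing it with the given $F$ through the uniqueness of analytic continuation closes the argument. Either route reduces the proposition to standard facts about real-analytic propagation, the only genuinely quantitative ingredient being the uniform derivative bounds described above.
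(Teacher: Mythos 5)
The paper itself gives no proof of Proposition \ref{initial}; it is imported from the literature (\cite{DSS}, Chapter III, Section 2, and Theorem 6 of \cite{Som}), so your argument can only be compared with the standard one --- which it essentially reproduces. The steps you describe are the right ones: iterate $\dx0 F=-\un D F$ (using that $\un D$ has constant coefficients and acts only in $\un x$) to get $\dx0^k F=(-1)^k\un D^kF$, hence $\dx0^kF(0,\un x)=(-1)^k\un D^kf(\un x)$; use $\ov D D=\Delta_{m+1}$ to conclude that the components of $F$ are harmonic, hence real analytic; and identify the series in (\ref{ee-cke}) with the Taylor expansion of $F$ in $x_0$. (A minor caveat: applying $\ov D$ to $DF=0$ presupposes $F\in C^2$, while monogenicity only assumes $C^1$; this is repaired by the ellipticity of $D$ or by the Cauchy integral formula for monogenic functions, and is usually passed over in silence.)

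The one genuine gap is globality. The derivative bounds you invoke, taken over ``a slightly enlarged compact $K'$'', produce some \emph{finite} $R>0$ and therefore only show that the $x_0$-Taylor series converges, and sums to $F$, for $|x_0|<R$; the Cauchy--Kovalevskaya alternative likewise only produces a solution on a neighbourhood of $\{x_0=0\}$. But (\ref{ee-cke}) asserts equality on all of $\R^{m+1}$ with uniform convergence on every compact subset, so a lower bound on the radius of convergence is not enough: you need it to be infinite. This is where the hypothesis that $F$ is monogenic (hence harmonic) on \emph{all} of $\R^{m+1}$ must be used: for any $\rho>0$ and any $R>\rho$ the interior estimates for harmonic functions give $\sup_{|x|\le\rho}|\dx0^kF|\le C^k\,k!\,(R-\rho)^{-k}\sup_{|x|\le R}|F|$, and letting $R\to\infty$ shows the Taylor series in $x_0$ has infinite radius of convergence, uniformly for $\un x$ in a compact set; its sum is then entire in $x_0$ and agrees with the real-analytic function $x_0\mapsto F(x_0,\un x)$ near $0$, hence everywhere. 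With that adjustment your proof is complete and is the argument behind the cited references (it is also exactly the ``infinite radius of convergence of entire monogenic functions'' fact the paper reuses later in the proof of Theorem \ref{bigT}).
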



\section{A unitary transform from   
$L^2 ({\mathbb R}^{m}, d^mx) \otimes \C_{m} $ to a Hilbert
space of monogenic functions on $\R\times \R^{m}$}
\label{s-3}

We can view the CST 
unitary map $U$ in  the diagram (\ref{d2})
as a unitarization of the analytic continuation 
$\Cc$ from $\R^m$ to $\C^m$. Indeed, by precomposing
$\Cc$ with the smoothening contracting map
$e^{\frac{\Delta}2}$ one obtains a unitary 
isomorphism from a $L^2$ space on $\R^m$ to
a space of holomorphic square integrable functions on
$\C^m$, the complexification of $\R^m$. 

Aiming at obtaining an analogous 
unitarization 
of the Cauchy-Kowalewsky (CK) extension (\ref{ee-cke}), we precompose  
it with the same smoothening contracting map
$e^{\frac{\Delta}2}$ or, equivalently, we
substitute the vertical arrow
in the diagram (\ref{d2}) by the CK extension,

\begin{align}
 \label{d3}
\begin{gathered}
\xymatrix{
&&  \mathcal{M} (\R^{m+1}) \\
L^2(\R^m, d \un x)\otimes \C_{m}  \ar@{^{(}->}[rr]_{e^{\frac{\Delta}2}}  \ar@{^{(}->}[rru]^{V} && \widetilde \Aa (\R^m)\otimes \C_{m},
\ar[u]_{\, e^{-x_0 \un D}}
  }
\end{gathered}
\end{align}
where $\mathcal{M} (\R^{m+1})$ denotes the space of $\C_{m}$-valued monogenic functions on $\R^{m+1}$.
Let $\rho_1$ be the heat kernel in $\R^m$, as in Section \ref{cst},
\be
\rho_1(x) = (2\pi)^{-m/2}e^{-\frac{x^2}{2}} = (2\pi)^{-m} \int_{\R^m} e^{-\frac{p^2}{2}} 
e^{i(\un p, \un x)} d\un p.
\ee

From Proposition \ref{pu2}, we then have the Cauchy-Kowalevski extension of $\rho_1$,
$$
e^{-x_0\un D} \rho_1 (\un x) = (2\pi)^{-m} \int_{\R^m} e^{-\frac{p^2}{2}} 
e^{i(\un p, \un x)} e^{-ix_0 \un p}d\un p.
$$

Our main result in this Section is then
\begin{theorem}\label{bigT}For $\varphi\in L^2(\R^m, d\un x)\otimes \C_m$ define
\be\label{defV}
V(\varphi) (x_0,\un x) = (2\pi)^{-m}\int_{\R^m} \left(\int_{\R^m} e^{-\frac{p^2}{2}} 
e^{i(\un p, \un x-\un y)} e^{-ix_0 \un p}d\un p\right) \varphi(y)d\un y,
\ee
which can be abbreviated by
$$
V(\varphi) = e^{-x_0\un D} \circ e^{\frac{\Delta}{2}}\varphi.
$$
Then, in the diagram 
\begin{align}
 \label{d333}
\begin{gathered}
\xymatrix{
&&  \mathcal{M}L^2 (\R^{m+1}, d\mu)  \\
L^2(\R^m, d \un x)\otimes \C_{m}  \ar@{^{(}->}[rr]_{e^{\frac{\Delta}2}}  \ar[rru]^{V} && \widetilde \Aa (\R^m)\otimes 
\C_{m},
\ar[u]_{\, e^{-x_0 \un D}}
  }
\end{gathered}
\end{align}
the map $V:L^2(\R^m,d\un x)\otimes \C_m \to {\mathcal M}L^2(\R^{m+1}, d\mu)$ is  a unitary isomorphism 
of Hilbert spaces, where ${\mathcal M}L^2(\R^{m+1}, d\mu)$ is the 
Hilbert space of monogenic functions on $\R^{m+1}$ which are $L^2$ with respect to the measure 
$$
d\mu=\frac{1}{\sqrt{\pi}}e^{-x_0^2}dx_0d\un x,
$$
and where the standard inner product on $\C_m$ is considered.
\end{theorem}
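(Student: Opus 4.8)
The plan is to reduce everything to the Fourier transform in the spatial variable $\un x$, where the operators $e^{\frac{\Delta}{2}}$ and $e^{-x_0\un D}$ become explicit multipliers. First I would rewrite the transform on the Fourier side: with $\hat\varphi$ the Fourier transform of $\varphi$, the defining integral \eqref{defV} becomes
\[
V(\varphi)(x_0,\un x) = (2\pi)^{-m/2}\int_{\R^m} e^{-\frac{p^2}{2}}\, e^{i(\un p,\un x)}\, e^{-ix_0\un p}\,\hat\varphi(\un p)\, d\un p,
\]
so that for each fixed $x_0$ the slice $V(\varphi)(x_0,\cdot)$ is the inverse Fourier transform of $\un p\mapsto e^{-\frac{p^2}{2}}e^{-ix_0\un p}\hat\varphi(\un p)$. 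The key algebraic input is that the $\C_m$-valued factor $e^{-ix_0\un p}=e^{-x_0|\un p|}\chi_+(\un p)+e^{x_0|\un p|}\chi_-(\un p)$ is assembled from the projectors $\chi_\pm(\un p)=\frac12(1\pm\frac{i\un p}{|\un p|})$, which are idempotent, mutually annihilating, sum to $1$, and are self-adjoint with respect to the standard inner product on $\C_m$ (since $(i\un p)^\dagger=i\un p$); hence $\chi_+(\un p)\hat\varphi(\un p)$ and $\chi_-(\un p)\hat\varphi(\un p)$ are orthogonal in $\C_m$ for each $\un p$.

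With this in hand the isometry is a direct computation. For fixed $x_0$, Plancherel in $\un x$ gives $\int_{\R^m}|V(\varphi)(x_0,\un x)|^2\,d\un x = \int_{\R^m} e^{-p^2}\,|e^{-ix_0\un p}\hat\varphi(\un p)|^2\,d\un p$, and the orthogonality of the projector components turns the integrand into $e^{-p^2}\bigl(e^{-2x_0|\un p|}|\chi_+\hat\varphi|^2 + e^{2x_0|\un p|}|\chi_-\hat\varphi|^2\bigr)$, the cross terms vanishing because $e^{\mp x_0|\un p|}$ are real scalars. Integrating against $\frac{1}{\sqrt\pi}e^{-x_0^2}dx_0$ and using $\frac{1}{\sqrt\pi}\int_\R e^{\pm 2x_0|\un p|-x_0^2}dx_0 = e^{p^2}$ (completing the square), the factor $e^{-p^2}$ cancels exactly and the $x_0$-integration collapses, leaving $\int_{\R^m}(|\chi_+\hat\varphi|^2+|\chi_-\hat\varphi|^2)\,d\un p = \int_{\R^m}|\hat\varphi|^2\,d\un p = \|\varphi\|^2$. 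This cancellation is precisely the reason for the weight $\frac{1}{\sqrt\pi}e^{-x_0^2}$ in $d\mu$: it is the unique Gaussian that absorbs the heat-kernel factor $e^{-p^2}$. Thus $V$ is an isometry.

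For surjectivity I would first note that $V(\varphi)$ is monogenic: the Gaussian $e^{-\frac{p^2}{2}}$ dominates the growth $e^{|x_0|\,|\un p|}$ of $e^{-ix_0\un p}$, so one may differentiate \eqref{defV} under the integral sign and invoke Proposition \ref{pu2}. Conversely, any $F\in\mathcal{M}L^2(\R^{m+1},d\mu)$ is smooth (by ellipticity of $D$, since $D\ov D=\Delta_{m+1}$) and, by Fubini, satisfies $F(x_0,\cdot)\in L^2(\R^m)\otimes\C_m$ for a.e.\ $x_0$; transforming the Weyl equation $\partial_{x_0}F=-\un D F$ in $\un x$ yields $\partial_{x_0}\hat F = -i\un p\,\hat F$, whence $\hat F(x_0,\un p)=e^{-ix_0\un p}\hat f(\un p)$ with $\hat f=\hat F(0,\cdot)$. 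Repeating the isometry computation gives $\|F\|^2_{d\mu}=\int_{\R^m}|\hat f(\un p)|^2 e^{p^2}\,d\un p$, so $e^{\frac{p^2}{2}}\hat f\in L^2$; setting $\hat\varphi:=e^{\frac{p^2}{2}}\hat f$ defines $\varphi\in L^2(\R^m)\otimes\C_m$, and then $\widehat{V(\varphi)}=e^{-\frac{p^2}{2}}e^{-ix_0\un p}\hat\varphi = e^{-ix_0\un p}\hat f=\hat F$, i.e.\ $V(\varphi)=F$. Hence $V$ is onto.

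I expect the main obstacle to be the rigorous justification of the Fourier-side manipulations for general $L^2$ data rather than for Schwartz functions: the Fubini interchange of the $\un p$- and $x_0$-integrations, the passage of Plancherel through the $x_0$-integral, and especially the characterization $\hat F(x_0,\un p)=e^{-ix_0\un p}\hat f(\un p)$ of the elements of $\mathcal{M}L^2(\R^{m+1},d\mu)$, which requires controlling the $x_0$-evolution of the slices $F(x_0,\cdot)$. These are best handled by first proving the statement on the dense subspace of $\varphi\in\Ss(\R^m)\otimes\C_m$ satisfying \eqref{condition} and then extending to all of $L^2(\R^m)\otimes\C_m$ by continuity, using the isometry already established to pass to the limit.
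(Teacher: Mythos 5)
Your isometry argument is essentially the paper's own (Lemmas \ref{ee-scst} and \ref{l4}): both pass to the Fourier side, observe that the cross terms between the two spectral components of $e^{-ix_0\un p}$ disappear under the $x_0$-integration, and use the Gaussian identity $\tfrac{1}{\sqrt\pi}\int_\R e^{\pm 2x_0|\un p|-x_0^2}\,dx_0=e^{|\un p|^2}$ to cancel the heat factor $e^{-|\un p|^2}$. You organize the cancellation through the self-adjoint projectors $\chi_\pm(\un p)$, the paper through the oddness in $x_0$ of the $\sinh$ term in $e^{-2ix_0\un p}=\cosh(2x_0|\un p|)-i\sinh(2x_0|\un p|)\,\un p/|\un p|$; these are the same computation in two notations. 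Monogenicity by differentiation under the integral sign is Lemma \ref{l1}.

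Where you genuinely diverge is surjectivity, and that is also where your argument has a gap. The paper does not Fourier-analyze a general $F\in\mathcal{M}L^2(\R^{m+1},d\mu)$; it expands the restriction $f_0=F(0,\cdot)$ in the functions $x^k e^{-x^2/4}e_A$ (the images under $e^{\frac{\Delta}{2}}$ of the Hermite basis, Lemma \ref{l2}), applies the Cauchy--Kowalevski extension of Proposition \ref{initial} term by term, and concludes from the closedness of the isometric image together with the orthogonality of the $\varphi_k e_A$. Your route instead rests on the identity $\hat F(x_0,\un p)=e^{-ix_0\un p}\hat f(\un p)$ for a.e.\ $(x_0,\un p)$, and that identity is the entire content of the surjectivity claim: $\un D F(x_0,\cdot)$ need not lie in $L^2$, the multiplier $e^{-ix_0\un p}$ grows like $e^{|x_0|\,|\un p|}$ and so does not act on tempered distributions, and the slice $x_0=0$ has $\mu$-measure zero, so you must first produce a full-measure set of times from which to evolve and control the continuity of $x_0\mapsto F(x_0,\cdot)$ in a suitable topology. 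The remedy you propose --- prove everything on $\Ss(\R^m)\otimes\C_m$ and extend by density --- establishes only that $V$ is an isometry with closed range; it cannot show that an arbitrary element of $\mathcal{M}L^2(\R^{m+1},d\mu)$ lies in that range, since such an element is not presented as a limit of transforms of Schwartz functions. To close the gap you would either have to justify the Fourier-side evolution equation rigorously (e.g.\ using harmonicity and real-analyticity of $F$, which follow from $\bar D D=\Delta_{m+1}$, together with local $L^2$ control of the slices), or fall back on the paper's expansion of the initial datum.
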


Let us now list some direct consequences of this theorem.

\begin{corollary}The subspace of monogenic functions which are in $L^2(\R^{m+1},d\mu)\otimes \C_m$, 
${\mathcal M}L^2(\R^{m+1},d\mu)$,
is a closed subspace of $L^2(\R^{m+1},d\mu)\otimes \C_m$.
\end{corollary}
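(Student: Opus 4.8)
The plan is to read off closedness directly from the unitarity of $V$ established in Theorem \ref{bigT}, with essentially no extra work. First I would recall that a unitary isomorphism is in particular a surjective isometry onto its range, so that $\|V\varphi\|_{L^2(\R^{m+1},d\mu)\otimes\C_m} = \|\varphi\|_{L^2(\R^m,d\un x)\otimes\C_m}$ for every $\varphi\in L^2(\R^m,d\un x)\otimes\C_m$. Here the crucial observation is that the norm appearing on the range is precisely the restriction to $\mathcal{M}L^2(\R^{m+1},d\mu)$ of the ambient norm on $L^2(\R^{m+1},d\mu)\otimes\C_m$; this is immediate from the definition of $\mathcal{M}L^2(\R^{m+1},d\mu)$ in Theorem \ref{bigT} as the space of monogenic functions that are $L^2$ with respect to $d\mu$, equipped with the standard $L^2$ inner product and the standard inner product on $\C_m$.

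Next I would exploit completeness. The domain $L^2(\R^m,d\un x)\otimes\C_m$ is a complete Hilbert space, and the image of a complete metric space under a surjective isometry is again complete. Since Theorem \ref{bigT} asserts that $V$ maps \emph{onto} $\mathcal{M}L^2(\R^{m+1},d\mu)$, it follows that $\mathcal{M}L^2(\R^{m+1},d\mu)$ is complete with respect to the restricted ambient norm. Concretely, given a Cauchy sequence in $\mathcal{M}L^2(\R^{m+1},d\mu)$, its $V$-preimages form a Cauchy sequence in $L^2(\R^m,d\un x)\otimes\C_m$, which converges to some $\varphi$, and then $V\varphi$ is the limit of the original sequence and lies in $\mathcal{M}L^2(\R^{m+1},d\mu)$.

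Finally I would invoke the elementary fact that a complete subspace of a metric space is closed in that space: any limit point of $\mathcal{M}L^2(\R^{m+1},d\mu)$ taken in $L^2(\R^{m+1},d\mu)\otimes\C_m$ is the limit of a Cauchy sequence from $\mathcal{M}L^2(\R^{m+1},d\mu)$, which by completeness already converges within $\mathcal{M}L^2(\R^{m+1},d\mu)$, and limits in a Hausdorff space are unique. Hence $\mathcal{M}L^2(\R^{m+1},d\mu)$ is a closed subspace of $L^2(\R^{m+1},d\mu)\otimes\C_m$.

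There is no real obstacle here, since the content is carried entirely by Theorem \ref{bigT}; the only point that warrants a word of care is the compatibility of the two norms noted in the first paragraph, without which one could not conclude that abstract completeness of the range implies closedness inside the concrete ambient Hilbert space.
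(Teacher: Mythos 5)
Your argument is correct and matches the paper's reasoning: the corollary is stated as a direct consequence of Theorem \ref{bigT}, and within the proof of that theorem the authors use exactly this logic (the isometric image of the complete space $L^2(\R^m,d\un x)\otimes\C_m$ under $V$ is closed, and by surjectivity that image is ${\mathcal M}L^2(\R^{m+1},d\mu)$). Your extra remark about the compatibility of the range norm with the ambient norm is a reasonable point of care but introduces nothing beyond the paper's approach.
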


We also obtain a characterization of the range of the heat operator in terms of monogenic functions, 
which is analogous to the one given in terms of holomorphic functions by the Segal-Bargmann theorem 
(see, for example, \cite{Ha3}).

\begin{corollary}
A real analytic function $F$ on $\R^m$ is of the form 
$$
F=e^{\frac{\Delta}{2}}f,
$$
with $f\in L^2(\R^m, dx)$, iff its monogenic extension to $\R^{m+1}$ exists and is $d\mu$-square-integrable, 
$$
e^{-x_0\un D}f \in \mathcal{M}L^2 (\R^{m+1}, d\mu).
$$
\end{corollary}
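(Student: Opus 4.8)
The plan is to read this characterization off directly from the surjectivity half of the unitarity asserted in Theorem \ref{bigT}, combined with the elementary fact that evaluating a Cauchy--Kowalevski extension at $x_0=0$ returns the original function. Throughout I will interpret ``the monogenic extension of $F$'' as the CK extension $e^{-x_0\un D}F$ of Proposition \ref{initial}, and, since $F$ is $\C_m$-valued, I let $f$ range over $L^2(\R^m,d\un x)\otimes\C_m$.

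For the forward implication I would start from $F=e^{\frac{\Delta}{2}}f$ with $f\in L^2(\R^m,d\un x)\otimes\C_m$ and simply invoke the factorization $V=e^{-x_0\un D}\circ e^{\frac{\Delta}{2}}$ recorded in (\ref{defV}): the CK extension of $F$ is then
\[
e^{-x_0\un D}F \;=\; e^{-x_0\un D}\circ e^{\frac{\Delta}{2}}f \;=\; V(f),
\]
which by Theorem \ref{bigT} is a well-defined monogenic function lying in $\mathcal{M}L^2(\R^{m+1},d\mu)$. Thus the monogenic extension exists and is $d\mu$-square-integrable.

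For the converse I would assume $F$ real analytic with monogenic extension $G:=e^{-x_0\un D}F$ existing and satisfying $G\in\mathcal{M}L^2(\R^{m+1},d\mu)$. Because $V$ is surjective by Theorem \ref{bigT}, I can choose $f\in L^2(\R^m,d\un x)\otimes\C_m$ with $V(f)=G$, i.e.
\[
e^{-x_0\un D}\circ e^{\frac{\Delta}{2}}f \;=\; e^{-x_0\un D}F
\]
as monogenic functions on $\R^{m+1}$, and then restrict both sides to $x_0=0$. Using that $(e^{-x_0\un D}g)(0,\un x)=g(\un x)$ --- the $k=0$ term of the series (\ref{ee-cke}) --- this collapses to $e^{\frac{\Delta}{2}}f=F$, exhibiting $F$ in the desired form.

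The step I expect to carry the real content is the legitimacy of this restriction argument, namely the uniqueness of the monogenic extension: by Proposition \ref{initial} a monogenic function on $\R^{m+1}$ is determined by its trace at $x_0=0$, so the identity $V(f)=G$ of $L^2$-monogenic functions genuinely forces equality of the traces $e^{\frac{\Delta}{2}}f$ and $F$, rather than agreement only up to some monogenic ambiguity. Once that uniqueness is in hand, the corollary is a formal consequence of Theorem \ref{bigT}, the whole proof being nothing more than transporting the surjectivity of $V$ through the trace map $G\mapsto G(0,\,\cdot\,)$; no analytic estimates beyond those already contained in the main theorem are required.
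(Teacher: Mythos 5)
Your proof is correct and matches the paper's intent: the corollary is stated there without proof as a ``direct consequence'' of Theorem \ref{bigT}, and your argument --- the forward direction via the factorization $V=e^{-x_0\un D}\circ e^{\frac{\Delta}{2}}$, the converse via surjectivity of $V$ plus restriction to $x_0=0$ using uniqueness of the monogenic extension (Proposition \ref{initial}) --- is exactly the natural formalization of that remark. No gaps; the only cosmetic point is that the displayed formula in the statement should read $e^{-x_0\un D}F$ rather than $e^{-x_0\un D}f$, as you correctly interpreted.
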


Let now $\{H_k, k\in \N_0^m\}$ denote the orthogonal basis of $L^2(\R^m, e^{-x^2}dx)$ 
consisting of Hermite polynomials on $\R^m$, with
$$
||H_k||^2= \pi^{m/2} 2^k k!,
$$
where we use multi-index notation: $k\in \N^m_0$, 
\begin{equation}\label{hermites}
H_k(x):=H_{k_1}(x_1)\cdots H_{k_m}(x_m),
\end{equation} 
$2^k:=2^{k_1+\cdots+k_m}$,  
$k!:=k_1!\cdots k_m!$ and $x^k= x_1^{k_1}\cdots x_m^{k_m}$.

Defining, 
\begin{equation}\label{euro}
\varphi_k (x) = H_k(x) e^{-\frac{x^2}{2}},
\end{equation}
we have that $\{\varphi_k, k\in \N_0^m\}$ is an orthogonal basis for $L^2(\R^m, d\un x)$.
From the isometricity of $V$ and Lemma \ref{l2} below, we also obtain the following

\begin{corollary}\label{basis}Let
$$
\psi_k =2^{m/2}V(\varphi_k) =e^{-x_0\un D}\left(x^ke^{-\frac{x^2}{4}}\right), \,\,\, k\in \N_0^m.
$$
Then, the set
$$
\{\psi_k e_A, k\in \N_0^m, A\subset\{1,\dots,m\}\}
$$
is an orthogonal basis for the Hilbert space $\mathcal{M}L^2 (\R^{m+1}, d\mu)$, where $e_{\emptyset}=1$.
\end{corollary}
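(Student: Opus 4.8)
The plan is to read the statement as a transport of structure under the unitary isomorphism $V$ supplied by Theorem \ref{bigT}: a unitary isomorphism of Hilbert spaces sends any orthogonal basis of its domain to an orthogonal basis of its codomain, so the whole assertion follows once I exhibit a convenient orthogonal basis of $L^2(\R^m, d\un x)\otimes \C_m$ and identify its image under $V$ with the functions $\psi_k e_A$ up to a nonzero scalar. No new analytic input is needed here beyond the heat-kernel identity already isolated in Lemma \ref{l2}.

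First I would build the basis of the domain. The Hermite functions $\varphi_k(x)=H_k(x)e^{-x^2/2}$, $k\in \N_0^m$, form an orthogonal basis of $L^2(\R^m, d\un x)$, as recorded above, with $\|\varphi_k\|^2=\pi^{m/2}2^k k!$. With respect to the standard inner product on $\C_m$ the elements $e_A$, $A\subset\{1,\dots,m\}$, are orthonormal, so for the tensor-product inner product $\langle f,g\rangle=\int_{\R^m}\langle f(\un x),g(\un x)\rangle_{\C_m}\,d\un x$ the family $\{\varphi_k e_A\}$ is orthogonal and complete in $L^2(\R^m,d\un x)\otimes\C_m$, with $\langle \varphi_k e_A,\varphi_{k'}e_{A'}\rangle=\delta_{kk'}\delta_{AA'}\|\varphi_k\|^2$.

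Next I would compute the image of this basis under $V$. Since $\varphi_k$ is scalar-valued it commutes with the Clifford-valued kernel $e^{-ix_0\un p}$ appearing in the integrand of (\ref{defV}), and the constant factor $e_A$ can then be pulled to the right past the entire double integral; this yields $V(\varphi_k e_A)=V(\varphi_k)\,e_A$. By Lemma \ref{l2} one has $e^{\frac{\Delta}{2}}\varphi_k=2^{-m/2}x^k e^{-x^2/4}$, whence $V(\varphi_k)=e^{-x_0\un D}\big(e^{\frac{\Delta}{2}}\varphi_k\big)=2^{-m/2}e^{-x_0\un D}(x^k e^{-x^2/4})=2^{-m/2}\psi_k$, which also verifies the stated formula $\psi_k=2^{m/2}V(\varphi_k)$. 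Combining, $V(\varphi_k e_A)=2^{-m/2}\psi_k e_A$.

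Finally I would conclude. Because $V$ is a unitary isomorphism onto $\mathcal{M}L^2(\R^{m+1},d\mu)$, the image $\{V(\varphi_k e_A)\}=\{2^{-m/2}\psi_k e_A\}$ is an orthogonal basis of $\mathcal{M}L^2(\R^{m+1},d\mu)$, and rescaling each vector by the nonzero constant $2^{m/2}$ preserves orthogonality and the closed span, so $\{\psi_k e_A\}$ is itself an orthogonal basis. The only point demanding care is the identity $V(\varphi_k e_A)=V(\varphi_k)e_A$ of the third paragraph: the Clifford algebra is noncommutative and the exponential kernel is $\C_m$-valued, so one must place $e_A$ on the right and invoke the centrality of scalar functions to move it outside. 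The substantive analytic computation, namely the heat evolution of the Hermite functions, is precisely what is delegated to Lemma \ref{l2}; granting that, the remainder is a formal transport of an orthogonal basis.
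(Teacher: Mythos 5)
Your proposal is correct and follows essentially the same route the paper intends: transport the orthogonal basis $\{\varphi_k e_A\}$ of $L^2(\R^m,d\un x)\otimes\C_m$ through the unitary isomorphism $V$ of Theorem \ref{bigT}, using Lemma \ref{l2} to identify $V(\varphi_k)=2^{-m/2}\psi_k$ and the right-factoring of the constant $e_A$ to get $V(\varphi_k e_A)=V(\varphi_k)e_A$. The paper states the corollary as an immediate consequence of exactly these ingredients, so your writeup simply makes the intended argument explicit.
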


\begin{remark}
The coherent state transform of Hall is onto ${\mathcal H}L^2(\C^m,\nu dxdy)$. Let us consider its inverse
$$
U^{-1}: {\mathcal H}L^2(\C^m,\nu dxdy) \to L^2(\R^m,dx).
$$ 

By composing this operator with the operator $V$ above we obtain the operator
$$
V\circ U^{-1}:  {\mathcal H}L^2(\C^m,\nu dxdy) \to \mathcal{M}L^2 (\R^{m+1}, d\mu)
$$
which is a unitary isomorphism.
\end{remark}

We will prove theorem \ref{bigT} by a sequence of lemmas.

\begin{lemma}\label{l1}
If $\varphi \in L^2(\R^m, d\un x)\otimes \C_m$ then $V(\varphi)$ is a monogenic function on $\R^{m+1}$.
\end{lemma}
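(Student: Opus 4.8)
The goal is to show that $V(\varphi)$, defined by the double integral in (\ref{defV}), is monogenic whenever $\varphi \in L^2(\R^m,d\un x)\otimes \C_m$. The natural strategy is to identify $V(\varphi)$ as a Cauchy-Kowalevski extension of a function that satisfies the hypotheses of Proposition \ref{pu2}, and then invoke that proposition directly.

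Let me think about the structure. $V(\varphi) = e^{-x_0 \un D} \circ e^{\Delta/2}\varphi$. So first I'd set $g = e^{\Delta/2}\varphi$, the heat evolution at time $1$. For $\varphi \in L^2$, this $g$ is real analytic with entire extension, and crucially the heat semigroup is smoothing, so I'd expect $g$ to be Schwarz-class (or close to it) with rapidly decaying Fourier transform. The point of precomposing with $e^{\Delta/2}$ is exactly to produce a Gaussian-damped Fourier transform $\hat g(\un p) = e^{-p^2/2}\hat\varphi(\un p)$, which lives in Schwarz space. Then $V(\varphi)$ is precisely the integral (\ref{ft}) applied to $g$, and Proposition \ref{pu2} gives monogenicity provided the condition (\ref{condition}) holds, namely $b_{x_0}\hat g \in \Ss(\R^m)\otimes\C_m$.

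Key steps in order: First I'd compute the Fourier transform of $e^{\Delta/2}\varphi$ and record that it equals $e^{-p^2/2}\hat\varphi(\un p)$ up to the normalization in (\ref{defV}); this makes the inner integral in (\ref{defV}) the Fourier representation (\ref{ft}) of the CK extension. Second, I'd verify that $\hat g = e^{-p^2/2}\hat\varphi$ is Schwarz: since $\hat\varphi \in L^2$ and $e^{-p^2/2}$ is Schwarz and decays super-polynomially, the product is Schwarz. Third, I'd check the hypothesis (\ref{condition}): because $b_{x_0}$ grows at most like $e^{|x_0|\,|\un p|}$ (from the explicit form (\ref{be})), multiplying against the Gaussian $e^{-p^2/2}$ still leaves a Schwarz function for each fixed $x_0$, so $b_{x_0}\hat g \in \Ss(\R^m)\otimes\C_m$. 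With these in hand, Proposition \ref{pu2} applies and yields that $V(\varphi)$ is monogenic and solves the Cauchy problem (\ref{supermonogenic}) with initial data $g$.

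The main obstacle I anticipate is the integrability and differentiation-under-the-integral-sign justification when $\varphi$ is a general $L^2$ function rather than Schwarz. Proposition \ref{pu2} is stated for $f \in \Ss(\R^m)\otimes\C_m$, but here $g = e^{\Delta/2}\varphi$ need not be Schwarz for arbitrary $\varphi \in L^2$ (its Fourier transform $e^{-p^2/2}\hat\varphi$ is only guaranteed to have a Gaussian factor times an $L^2$ function). The cleanest fix is to observe that $e^{-p^2/2}\hat\varphi(\un p)$ is integrable and has all moments finite — the Gaussian dominates any polynomial — so even if $\hat\varphi$ is merely $L^2$, the damped product lies in $L^1 \cap \Ss'$ with enough decay that $b_{x_0}\hat g$ is integrable together with all its $\un p$-derivatives locally uniformly in $x_0$. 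I would therefore either extend Proposition \ref{pu2} to this slightly larger class by a direct dominated-convergence argument for differentiating under the integral sign, or approximate $\varphi$ by Schwarz functions $\varphi_n \to \varphi$ in $L^2$, apply Proposition \ref{pu2} to each $\varphi_n$, and pass to the limit using that monogenicity is preserved under locally uniform convergence (guaranteed by the Gaussian control of the kernel). The Gaussian weight $e^{-p^2/2}$ is doing all the work: it makes the $\un p$-integral and all its $x_0$- and $\un x$-derivatives converge absolutely and locally uniformly, which is exactly what licenses differentiating under the integral and concluding $D\,V(\varphi) = 0$.
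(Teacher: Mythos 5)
Your proposal is correct in substance but routes the argument differently from the paper. The paper's proof works directly on the defining double integral (\ref{defV}): it differentiates under both integral signs (first the $\un p$-integral, whose Gaussian factor dominates $e^{-ix_0\un p}$ and all polynomial derivatives, then the $\un y$-integral, where the resulting kernel still decays like a Gaussian in $\un y$ and hence pairs absolutely with $\varphi\in L^2$), and concludes $\bigl(\partial_{x_0}+\un D\bigr)V(\varphi)=0$ because the kernel $e^{i((\un p,\un x-\un y)-x_0\un p)}$ is monogenic in $(x_0,\un x)$. You instead pass to the Fourier-side representation and try to invoke Proposition \ref{pu2}, then patch the fact that $g=e^{\Delta/2}\varphi$ need not be Schwarz for general $\varphi\in L^2$, either by extending that proposition via dominated convergence or by density of $\Ss(\R^m)\otimes\C_m$ in $L^2$ plus a Weierstrass-type theorem that locally uniform limits of monogenic functions are monogenic. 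Both routes rest on exactly the same engine, namely that $e^{-|\un p|^2/2}$ absorbs the growth $e^{|x_0|\,|\un p|}$ of $b_{x_0}$ together with any polynomial weights; your version is slightly longer but has the merit of making explicit the Cauchy-problem interpretation and of being honest about where Schwarz-class hypotheses are actually needed. One caveat: your intermediate claim that $e^{-p^2/2}\hat\varphi$ is Schwarz because $\hat\varphi\in L^2$ is false as stated (an $L^2$ function times a Gaussian need not even be continuous); you correctly retract this in your final paragraph, where you only use that the product is in $L^1$ with all moments finite, which is what the dominated-convergence and density arguments genuinely require. If you take the density route, you should also say a word about why the limit is taken locally uniformly (the kernel is, locally uniformly in $(x_0,\un x)$, an $L^2$ function of $\un y$, so Cauchy--Schwarz gives $|V\varphi_n-V\varphi|\le C\,\|\varphi_n-\varphi\|_{L^2}$ on compacta) and cite the Clifford-analysis Weierstrass theorem for the preservation of monogenicity.
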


\begin{proof}
By Leibniz rule, both 
$$
\frac{\partial}{\partial x_0}V(\varphi),\,\,\,{\rm and}\,\,\, \un D V(\varphi)
$$
can be computed by taking the differential operators inside both integral symbols in (\ref{defV}), due to the presence of 
the gaussian factor in the integrand which ensures the integrability of all of its derivatives. This then 
implies that 
$$
\left(\frac{\partial}{\partial x_0}+\un D\right)V(\varphi) =0,
$$
so that $V(\varphi)$ is a monogenic function on $\R^{m+1}$. 
\end{proof}

\begin{lemma}\label{l2}We have
$$
e^{\frac{\Delta}{2}}\varphi_k = 2^{-m/2}x^k e^{-\frac{x^2}{4}}.
$$
\end{lemma}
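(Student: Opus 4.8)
The plan is to exploit the product structure of every object in the statement in order to reduce the claim to a single one–dimensional identity, and then to establish that identity in one stroke for all $n$ at once by means of the Hermite generating function followed by an elementary Gaussian integral.

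First I would note that $e^{\Delta/2}$ acts as convolution with the heat kernel $\rho_1$, and that $\rho_1$ factorizes as $\rho_1(x)=\prod_{j=1}^m (2\pi)^{-1/2}e^{-x_j^2/2}$, so $e^{\Delta/2}$ is the tensor product of the one–dimensional heat operators in the separate variables. Since the test function factorizes, $\varphi_k(x)=\prod_{j=1}^m H_{k_j}(x_j)e^{-x_j^2/2}$, and the claimed right–hand side also factorizes as $2^{-m/2}x^k e^{-x^2/4}=\prod_{j=1}^m 2^{-1/2}x_j^{k_j}e^{-x_j^2/4}$, the $m$–dimensional statement is the product over $j$ of the scalar identity
\[
(2\pi)^{-1/2}\int_{\R} e^{-(x-y)^2/2}\,H_n(y)\,e^{-y^2/2}\,dy \;=\; 2^{-1/2}\,x^n\,e^{-x^2/4},\qquad n\in\N_0 .
\]
Thus it suffices to prove this one–variable formula.

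To handle all $n$ simultaneously I would use the generating function $e^{2yt-t^2}=\sum_{n\ge 0}H_n(y)\,t^n/n!$. Multiplying the scalar integral by $t^n/n!$, summing over $n$, and interchanging the sum with the integral gives
\[
\sum_{n\ge 0}\frac{t^n}{n!}\,(2\pi)^{-1/2}\!\int_{\R}e^{-(x-y)^2/2}H_n(y)e^{-y^2/2}\,dy
=(2\pi)^{-1/2}\!\int_{\R}e^{-(x-y)^2/2}\,e^{2yt-t^2}\,e^{-y^2/2}\,dy .
\]
The interchange is justified by dominated convergence: the partial sums are bounded in absolute value by $e^{-(x-y)^2/2-y^2/2}\,e^{2|y||t|+t^2}$, which is integrable in $y$ for $t$ in any bounded set (the Gaussian decay beats the linear term in the exponent).

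It then remains to evaluate the Gaussian integral on the right for fixed $t$. Collecting the exponents and completing the square in $y$ converts the integrand into $e^{-(y-c)^2}$, with $c=(x+2t)/2$, multiplied by a factor independent of $y$; performing $\int_{\R}e^{-(y-c)^2}dy=\sqrt{\pi}$ and simplifying the constant $(2\pi)^{-1/2}\sqrt{\pi}=2^{-1/2}$ yields precisely $2^{-1/2}e^{-x^2/4}\,e^{tx}$. Expanding $e^{tx}=\sum_{n\ge 0}x^n t^n/n!$ and matching coefficients of $t^n/n!$—which is legitimate since both sides are entire in $t$—gives the scalar identity, and taking the product over $j=1,\dots,m$ completes the proof. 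I expect the only places requiring genuine care to be the bookkeeping in completing the square (tracking the shift $c$ and the normalizing constants) and the routine domination argument for the sum–integral interchange; the structural reduction to one variable and the generating–function device are what render the actual computation short and transparent.
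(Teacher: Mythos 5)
Your proof is correct and follows essentially the same route as the paper's: both rest on the Hermite generating function $e^{2yt-t^2}=\sum_n H_n(y)t^n/n!$ together with a single Gaussian integral evaluated by completing the square. The only (cosmetic) difference is that you extract the $k$-th term by matching coefficients of $t^k/k!$ in the entire function of $t$, whereas the paper invokes the orthogonality relation $\langle H_k,H_l\rangle_{L^2(e^{-y^2}dy)}=\pi^{m/2}2^kk!\,\delta_{kl}$ to the same end; your write-up also supplies the reduction to one variable and the justification of the sum--integral interchange, which the paper leaves implicit.
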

\begin{proof}
This follows from the well-known identity
$$
e^{2xy-y^2}=\sum_{l\in \N_0^m} H_l(x)\frac{y^l}{l!}, \,\,\,\,\, x,y\in \R^m
$$
and from
$$
\langle H_k, H_l\rangle_{L^2(\R^m, e^{-x^2}dx)} = \pi^{m/2} 2^k k!\delta_{kl}.
$$
A simple evaluation of gaussian integrals then gives the result
$$
e^{\frac{\Delta}{2}}\varphi_k = \int_{\R^m} e^{-\frac{(x-y)^2}{2}} H_k(y) e^{-\frac{y^2}{2}}dy = 2^{-m/2}x^k e^{-\frac{x^2}{4}}.
$$
\end{proof}

\begin{lemma}
\label{ee-scst}
Let $f = \sum_A \, f_A \, e_A \, \in \Ss(\R^m, d\un x) \otimes \C_{m}$, with Fourier transform $\wh f$.
Then,
\ba
\label{ecthar}
\nonumber && V(f)(x_0, \un x)  = 
   \frac 1{(2\pi)^{m/2}} \, \int_{\R^m}  \, e^{- \frac {|\un p|^2}2} \, e^{i ((\un p, \un x) - x_0 \un p)} \,  \wh f(\un p) \, d\un p .
\ea
\end{lemma}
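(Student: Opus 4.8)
The plan is to obtain (\ref{ecthar}) directly from the definition (\ref{defV}) of $V$ by interchanging the two integrations and recognizing the resulting inner integral as a Fourier transform. Starting from (\ref{defV}) and using $e^{i(\un p,\un x-\un y)}=e^{i(\un p,\un x)}\,e^{-i(\un p,\un y)}$, where both scalar exponentials commute with the Clifford-valued factors, I would write
\be
V(f)(x_0,\un x) = (2\pi)^{-m}\int_{\R^m}\int_{\R^m} e^{-\frac{p^2}{2}}\,e^{i(\un p,\un x)}\,e^{-ix_0\un p}\,e^{-i(\un p,\un y)}\,f(\un y)\,d\un y\,d\un p .
\ee
This exhibits $V(f)$ as, on the Fourier side, the Cauchy--Kowalevski extension (\ref{ft}) applied to $e^{\frac{\Delta}{2}}f$, whose Fourier transform is $e^{-\frac{|\un p|^2}{2}}\wh f$; the content of the lemma is to make this identification rigorous.

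First I would justify the exchange of the order of integration by Fubini's theorem. Since $f\in\Ss(\R^m)\otimes\C_m$, each of its finitely many scalar components $f_A$ lies in $L^1(\R^m)$; moreover, for fixed $(x_0,\un x)$ the modulus of the integrand is dominated, componentwise, by a constant multiple of $e^{-\frac{|\un p|^2}{2}}\,e^{|x_0|\,|\un p|}\,|f(\un y)|$, using that $e^{-ix_0\un p}=e^{-x_0|\un p|}\chi_+(\un p)+e^{x_0|\un p|}\chi_-(\un p)$ is $O\!\left(e^{|x_0|\,|\un p|}\right)$ in the Clifford norm. Because the Gaussian dominates this exponential growth in $\un p$, the dominating function is integrable over $\R^m\times\R^m$, so Fubini applies componentwise and the $\un y$- and $\un p$-integrations may be exchanged.

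It then remains to evaluate the inner integral while tracking the noncommutative ordering. Performing the $\un y$-integration first yields $\int_{\R^m} e^{-i(\un p,\un y)}f(\un y)\,d\un y=(2\pi)^{m/2}\,\wh f(\un p)$, by the Fourier conventions fixed in Section \ref{ss-ca}. Since the Clifford-valued factor $e^{-ix_0\un p}$ stands to the left of $f(\un y)$ throughout, and the scalar factors $e^{-p^2/2}$ and $e^{i(\un p,\un x)}$ commute with everything, this factor remains to the left of $\wh f(\un p)$ after the substitution, so no reordering of the Clifford product is introduced. Collecting the constants $(2\pi)^{-m}(2\pi)^{m/2}=(2\pi)^{-m/2}$, writing $p^2=|\un p|^2$, and recombining $e^{i(\un p,\un x)}e^{-ix_0\un p}=e^{i((\un p,\un x)-x_0\un p)}$ gives precisely (\ref{ecthar}). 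The only genuine points requiring care are the Fubini justification in the presence of the exponentially growing but Gaussian-dominated Clifford factor $e^{-ix_0\un p}$, and the preservation of the left-to-right Clifford ordering; neither presents a serious obstacle.
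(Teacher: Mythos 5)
Your proof is correct. It takes a slightly different route from the paper's: you work directly with the kernel formula (\ref{defV}), justify interchanging the $\un y$- and $\un p$-integrations by Fubini (with the explicit dominating function $e^{-|\un p|^2/2}e^{|x_0|\,|\un p|}|f(\un y)|$, using that $\|e^{-ix_0\un p}\|=O(e^{|x_0||\un p|})$), and then recognize the inner $\un y$-integral as $(2\pi)^{m/2}\wh f(\un p)$. The paper instead starts from the Fourier inversion formula $f(\un x)=(2\pi)^{-m/2}\int e^{i(\un p,\un x)}\wh f(\un p)\,d\un p$ and pushes the two operators in the factorization $V=e^{-x_0\un D}\circ e^{\frac{\Delta}{2}}$ under the integral sign, using $e^{\frac{\Delta}2}e^{i(\un x,\un p)}=e^{-|\un p|^2/2}e^{i(\un x,\un p)}$ and the Cauchy--Kowalevski extension of the plane wave. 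The two arguments encode the same computation, but yours has the advantage of a concrete, fully checkable integrability estimate (you never have to justify commuting the infinite-order differential operator $e^{-x_0\un D}$ with an integral), while the paper's is shorter and makes the conceptual content --- that $V$ acts on plane waves by $e^{i(\un p,\un x)}\mapsto e^{-|\un p|^2/2}e^{i((\un p,\un x)-x_0\un p)}$ --- more visible. Your attention to the left-to-right Clifford ordering of $e^{-ix_0\un p}$ relative to $\wh f(\un p)$ is a genuine point that the paper leaves implicit, and you handle it correctly.
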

\begin{proof}
For $f\in \Ss(\R^m)\otimes \C_{m},$ we have
$$
f(\un x)= \frac 1{(2\pi)^{m/2}} \, \int_{\R^m} \, e^{i (\un p, \un x)} \, \wh f(\un p) \, d\un p  \,
=  \frac 1{(2\pi)^{m/2}} \, \int_{\R^m} \, e^{i (\un p, \un x)} \,  \sum_A \wh f_A(\un p) \,   e_A \, d\un p 
\, \,   \, . 
$$
and the result follows from  (\ref{ee-cke}),
$$
e^{\frac{\Delta}2} e^{i(\un x,\un p)} = e^{-\frac{|p|^2}{2}}e^{i(\un x,\un p)},
$$
and the fact that under the conditions of the proposition the heat operator can be taken inside the integral.
\end{proof}

\begin{remark} We also note the following useful expression,
\ba   
 \nonumber &&   V(f)(x_0,\un x)=\frac 1{(2\pi)^{m/2}} \int_{\R^m}  \, e^{- \frac {|\un p|^2}2} \, e^{i (\un p, \un x) } \,\left(
\cosh(x_0 |\un p|) -i  \sinh(x_0 |\un p|) \, \frac{\un p}{|\un p|} \right)\,  \wh f(\un p) \, d\un p.  
\ea
\end{remark}

\begin{lemma}
\label{l4}
For $f,h \in \Ss(\R^m) \otimes \C_{m}$, with $f=\sum_A f_A e_A, h=\sum_Ah_A e_A$, we have
$$
\langle V(f), V(g)\rangle_{L^2(\R^{m+1}, d\mu)\otimes \C_m} = \langle f,g\rangle_{L^2(\R^m, d\un x)\otimes \C_m}.
$$
\end{lemma}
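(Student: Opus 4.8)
The plan is to push everything to the Fourier side and then exploit the spectral structure of the projectors $\chi_\pm(\un p)$. By Lemma~\ref{ee-scst} I may represent both $V(f)$ and $V(g)$ as $\un p$‑integrals of $e^{-|\un p|^2/2}\, e^{i(\un p,\un x)}\, e^{-ix_0\un p}\,\wh f(\un p)$ (and likewise for $g$). Substituting these into $\langle V(f),V(g)\rangle_{L^2(\R^{m+1},d\mu)\otimes\C_m}$ produces a fourfold integral over $x_0$, $\un x$ and two momenta $\un p,\un p'$. Since the plane‑wave factors $e^{i(\un p,\un x)}$ are ordinary complex scalars (not Clifford‑valued), the $\un x$‑integration yields $(2\pi)^m\delta(\un p-\un p')$ and collapses the two momenta to one, leaving
$$
\langle V(f),V(g)\rangle=\frac{1}{\sqrt{\pi}}\int_\R\int_{\R^m}\bigl(e^{-ix_0\un p}\wh f(\un p),\,e^{-ix_0\un p}\wh g(\un p)\bigr)\,e^{-|\un p|^2}\,e^{-x_0^2}\,d\un p\,dx_0 .
$$
The Schwartz hypotheses guarantee that all the interchanges of integration, and in particular the use of the $\un x$‑integral as a delta, are legitimate.

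The crucial step is to understand the $\C_m$ inner product $\bigl(e^{-ix_0\un p}u,\,e^{-ix_0\un p}v\bigr)$ with $u=\wh f(\un p)$, $v=\wh g(\un p)$. Here I would first record that, for the standard inner product, each generator $e_j$ acts anti‑self‑adjointly: its left‑regular representation is a real (hence unitary) matrix squaring to $-1$, so $e_j^\ast=-e_j$. Consequently $\un p=\sum_j p_j e_j$ is anti‑self‑adjoint and $i\un p$ is self‑adjoint. Combined with $\un p^2=-|\un p|^2$, this shows that $\tfrac{i\un p}{|\un p|}$ is a self‑adjoint involution, whence $\chi_\pm(\un p)$ are mutually orthogonal, self‑adjoint idempotents with $\chi_++\chi_-=1$ and $\chi_+\chi_-=\chi_-\chi_+=0$. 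Writing $e^{-ix_0\un p}=e^{-x_0|\un p|}\chi_++e^{x_0|\un p|}\chi_-$ as in (\ref{be}) and expanding, the two cross terms carry the factor $(\chi_\pm u,\chi_\mp v)=(u,\chi_\pm\chi_\mp v)=0$ and drop out, leaving only
$$
\bigl(e^{-ix_0\un p}u,\,e^{-ix_0\un p}v\bigr)=e^{-2x_0|\un p|}(u,\chi_+ v)+e^{2x_0|\un p|}(u,\chi_- v).
$$

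Finally I would carry out the Gaussian integral in $x_0$. Completing the square gives $\tfrac{1}{\sqrt{\pi}}\int_\R e^{\mp2x_0|\un p|}e^{-x_0^2}\,dx_0=e^{|\un p|^2}$ for both signs, so the $x_0$‑integration turns the bracket into $e^{|\un p|^2}\bigl((u,\chi_+ v)+(u,\chi_- v)\bigr)=e^{|\un p|^2}(u,v)$. This factor exactly cancels the weight $e^{-|\un p|^2}$ produced by the two heat factors, and what remains is $\int_{\R^m}(\wh f(\un p),\wh g(\un p))\,d\un p$, which by Plancherel equals $\langle f,g\rangle_{L^2(\R^m,d\un x)\otimes\C_m}$. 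Note that the argument is carried out directly for general $f,g$, so no separate polarization is needed.

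The main obstacle is the middle step: the Clifford factors $e^{-ix_0\un p}$ do not commute with the $\C_m$‑values of $\wh f,\wh g$, so they cannot be treated as scalars, and the whole computation rests on correctly identifying the adjoints of $\un p$ and $\chi_\pm$ for the standard inner product. Once the orthogonality $\chi_+\chi_-=0$ and the self‑adjointness of $\chi_\pm$ are in hand, the vanishing of the cross terms and the cancellation of the two Gaussians are automatic.
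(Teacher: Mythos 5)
Your proof is correct and follows essentially the same route as the paper: Fourier representation from Lemma \ref{ee-scst}, the $\un x$-integration producing $(2\pi)^m\delta(\un p-\un p')$, a pointwise Clifford computation, the Gaussian $x_0$-integral cancelling $e^{-|\un p|^2}$, and Plancherel. The only (equivalent) variation is in the middle step: the paper moves the self-adjoint factor $e^{-ix_0\un p}$ across the inner product to get $e^{-2ix_0\un p}$ and kills the $\sinh(2x_0|\un p|)$ term by oddness in $x_0$, whereas you diagonalize via the orthogonal self-adjoint projectors $\chi_\pm(\un p)$ and kill the cross terms by $\chi_+\chi_-=0$ — the same cancellation in a different basis.
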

\begin{proof}
For any $1$--vector $\un p = \sum_{j=1}^m \, p_j e_j  \in  \R_{m}^1$
one has
\be
\label{ee-ip1v} 
(\un p \, u, v) =  -( u, \un p \, v)  \, , \quad    \forall u, v \in \C_{m}
\ee
and therefore
$$
(e^{i  \un p} \, u, v) = ( u, e^{i  \un p} \, v)  \, ,  \quad    \forall u, v \in \C_{m}, 
$$
where the hermiticity of the standard inner product in $\C_{m}$, $( \cdot , \cdot )$, is used.

Then, for $f,h \in \Ss(\R^m) \otimes \C_{m}$, with $f=\sum_A f_A e_A, h=\sum_Ah_A e_A$ we have, from 
Lemma \ref{ee-scst},
\bas
&& \langle V(f), V(h)\rangle_{L^2(\R^{m+1}, d\mu)\otimes \C_m} = \\
&=&
\frac 1{\sqrt{\pi}(2\pi)^{m}}
  \int_{\R^{m+1}  \times \R^{2m}} \, 
e^{i (\un p - \un q, \un x)} \, e^{- \frac{|p|^2+|q|^2}2} \,  \left(e^{-ix_0 \un p} \, \wh f(\un p),
e^{-ix_0 \un q} \, \wh h(\un q)\right)
\,  e^{-x_0^2} \, 
dx_0 d\un x   d\un p d\un q = \\
&=&
\frac 1{\sqrt{\pi}}
  \int_{\R \times \R^m} \, 
 \, e^{- {|p|^2}} \,  \left(e^{-ix_0 \un p} \, \wh f(\un p),
e^{-ix_0 \un p} \, \wh h(\un p)\right)
\,  e^{-x_0^2} \, 
dx_0  d\un p = \\
&=&
\frac 1{\sqrt{\pi}}
  \int_{\R \times \R^m} \, 
 \, e^{- {|p|^2}} \,  \left(e^{-2ix_0 \un p} \, \wh f(\un p),
 \, \wh h(\un p)\right)
\,  e^{-x_0^2} \, 
dx_0  d\un p = \\
&=&
\frac 1{\sqrt{\pi}}
  \int_{\R \times \R^m} \, 
 \, e^{- {|p|^2}} \,  \left[\cosh(2x_0|\un p|) \left(\wh f(\un p), \, \wh h(\un p)\right)
-i \frac{\sinh(2x_0|\un p|)}{|\un p|}  \, \left(\un p \, \wh f(\un p), \, \wh h(\un p)\right)\right]
\,  e^{-x_0^2} \, 
dx_0  d\un p = \\
&=&
\frac 1{\sqrt{\pi}}
  \int_{\R \times \R^m} \, 
 \, e^{- {|p|^2}} \,  \cosh(2x_0|\un p|) \left(\wh f(\un p), \, \wh h(\un p)\right)
\,  e^{-x_0^2} \, 
dx_0  d\un p = \\
&=&
  \int_{ \R^m} \, 
 \, \left(\wh f(\un p), \, \wh h(\un p)\right) d\un p = \\
&=& \langle f, h\rangle_{L^2(\R^m, d\un x)\otimes \C_m}.
\eas
\end{proof}

\begin{proof}{\it (of Theorem \ref{bigT})}
{}From the denseness of $\Ss(\R^m)$ in $L^2(\R^m,d \un x)$ we conclude from Lemma \ref{l4} that $V$ is an 
isometry onto its image which is, therefore, closed in $L^2(\R^{m+1}, d\mu)\otimes \C_m$. 
Moreover, Lemma
\ref{l1} ensures that the image of $V$ contains only functions which are monogenic on $\R^{m+1}$. Therefore, 
$V(L^2(\R^m,d \un x)\otimes \C_m)$ is a Hilbert space of monogenic functions.

To prove that the image of $V$ is 
all of $\mathcal{M}L^2 (\R^{m+1}, d\mu)$, note that the restriction of  
$f\in \mathcal{M}L^2 (\R^{m+1}, d\mu)$ to the 
hyperplane $x_0=0$, $f_0(\un x)= f(x_0=0, \un x)$, determines $f$ uniquely. Since entire monogenic functions 
have a Taylor series with infinite radius of convergence (see, for example, \cite{BDS, Som}), it follows that 
$f_0$ can be expressed uniquely in the form
$$
f_0 = \sum_A\sum_{k\in \N_0^m} \alpha_{k,A} x^k e^{-\frac{x^2}{4}}e_A, \,\,\,\,\,\, \alpha_{k,A}\in \C. 
$$
Now, from Proposition \ref{initial},
$$
f(x_0,\un x) = \sum_A\sum_{j=0}^\infty\frac{(-x_0)^j}{j!} \un D^j 
\left(\sum_{k\in \N_0^m} \alpha_{k,A} x^k e^{-\frac{x^2}{4}}\right)\,e_A=
\sum_A\sum_{j=0}^\infty\sum_{k\in \N_0^m}\frac{(-x_0)^j}{j!} 
\alpha_{k,A}\un D^j\left(x^k e^{-\frac{x^2}{4}}\right)\,e_A,
$$
since convergent power series can be differentiated term by term. (The Gaussian factor $e^{-\frac{x^2}{4}}$ could, of course, also be expanded in power series.) This series converges absolutely in 
all of $\R^{m+1}$ so that the two summations can be interchanged giving
$$
f(x_0,\un x)=\sum_A\sum_{k\in \N_0^m} \alpha_{k,A} e^{-x_0\un D}\left(x^ke^{-\frac{x^2}{4}}\right)\,e_A.
$$
From Lemma \ref{l2}, all partial sums
$$
\sum_A\sum_{k\in \N_0^m: \vert\vert k\vert\vert<N} \alpha_{k,A} e^{-x_0\un D}\left(x^ke^{-\frac{x^2}{4}}\right)\,e_A
$$
are in the image of $V$ which is closed.  Moreover, 
see also Corollary \ref{basis} above, from Lemma \ref{l4} and from the orthogonality of the set 
$\{\varphi_ke_A, k\in \N_0^m, A\subset\{1,\dots,m\}\}$ in $L^2(\R^m, d\un x)\otimes \C_m$, we obtain that 
the $L^2$ condition for $f$ 
in $\R^{m+1}$ with respect to the measure $d\mu$ is equivalent to 
$$
\sum_A\sum_{k\in \N_0^m} \alpha_{k,A} \varphi_k e_A\in L^2(\R^m, d\un x)\otimes \C_m.
$$
This completes the proof of Theorem \ref{bigT}.
\end{proof}

We finish this section with a Proposition on explicit expressions for $V(P_ke^{-\frac{x^2}{2}})$ where 
$P_k$ is an homogeneous monogenic polynomial of degree $k$ in $\R^m$.

\begin{proposition}Let $P_k$ be an homogeneous monogenic polynomial of degree $k$ in $\R^m$.
Then
$$
V\left(P_k e^{-\frac{x^2}{2}}\right) = e^{-\frac{x^2}{2}}\sum_{n=0}^\infty \frac{1}{n!}\left(\sum_{j=0}^{[n/2]} 
\frac{(-1)^j}{2^jj!} u(n,j)!\, x_0^{u(n,j)}\right)
H_{n,m,k}(\un x) P_k(\un x),
$$
where $u(n,j)=(n-2j)$ for $n$ even and $u(n,j)=(n-2j-1)$ for $n$ odd and the $H_{n,m,k}$ are the so-called generalized Hermite polynomials. 
\end{proposition}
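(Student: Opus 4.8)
The plan is to use the factorization $V = e^{-x_0\un D}\circ e^{\frac{\Delta}{2}}$ recorded in Theorem \ref{bigT} and reduce the statement to two computations: the heat evolution of $P_k e^{-\frac{x^2}{2}}$, followed by its Cauchy--Kowalevski extension. Since $P_k$ is monogenic, $\un D P_k=0$, and because $\un D^2=-\Delta_m$ this forces $\Delta_m P_k=0$; thus $P_k$ is a solid harmonic of degree $k$, which is the structural fact that drives everything.

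First I would compute $e^{\frac{\Delta}{2}}(P_k e^{-\frac{x^2}{2}})$. Writing this heat evolution as a Gaussian convolution and completing the square reduces it to an integral of the form $\int_{\R^m} e^{-|\un u|^2}\,P_k(\un u+\tfrac{\un x}{2})\,d\un u$. The mean--value property of harmonic polynomials (equivalently, $e^{\frac14\Delta}P_k=P_k$) together with the homogeneity $P_k(\tfrac{\un x}{2})=2^{-k}P_k(\un x)$ yields $e^{\frac{\Delta}{2}}(P_k e^{-\frac{x^2}{2}})=2^{-m/2-k}\,P_k(\un x)\,e^{-\frac{|\un x|^2}{4}}$. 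Equivalently, one can read this off from Lemma \ref{ee-scst} via the Hecke--Bochner identity $\widehat{P_k e^{-|\cdot|^2/2}}(\un p)=(-i)^k P_k(\un p)\,e^{-|\un p|^2/2}$, which holds precisely because $P_k$ is harmonic.

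Next I would apply $e^{-x_0\un D}=\sum_{\ell\ge 0}\frac{(-x_0)^\ell}{\ell!}\un D^\ell$ to the previous output and evaluate $\un D^\ell\big(P_k e^{-|\un x|^2/4}\big)$ by the two--term recursion that $\un D$ induces on radial multiples of $P_k$: one has $\un D(\alpha(|\un x|^2)P_k)=2\alpha'(|\un x|^2)\,\un x P_k$ and $\un D(\un x P_k)=-(m+2k)P_k$, together with $\un x^2=-|\un x|^2$. Hence each $\un D^\ell(P_k e^{-|\un x|^2/4})$ is a radial polynomial times $P_k$ (with an extra factor $\un x$ when $\ell$ is odd) times the Gaussian; these radial polynomials are exactly the generalized (Clifford--)Hermite polynomials $H_{n,m,k}$, proportional to Laguerre polynomials $L^{(m/2+k-1)}_{n/2}$ for even $n$ and to $\un x\,L^{(m/2+k)}_{(n-1)/2}$ for odd $n$. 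The equivalent and probably cleaner route is to expand $\cosh(x_0|\un p|)$ and $\tfrac{\un p}{|\un p|}\sinh(x_0|\un p|)$ in the remark following Lemma \ref{ee-scst} and evaluate the resulting Gaussian--polynomial Fourier integrals by Hecke--Bochner; the scalar (cosh) part produces the even--$n$ Hermite terms and the vector (sinh) part the odd--$n$ ones, which explains the parity split in $u(n,j)$.

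The main obstacle, and the only genuine content beyond bookkeeping, is the resummation. One must reconcile the Gaussian $e^{-|\un x|^2/4}$ produced by the heat flow with the Gaussian attached to the standard normalization of $H_{n,m,k}$, and then collect, for each Hermite level $n$, the powers of $x_0$ coming from all $\un D^\ell$ of matching parity. Carrying this out is what produces the inner sum $\sum_{j=0}^{[n/2]}\frac{(-1)^j}{2^j j!}u(n,j)!\,x_0^{u(n,j)}$, with the coefficients $\frac{(-1)^j}{2^j j!}$ arising from the Gaussian rescaling and the factorials from the Fourier evaluation. I would fix the overall normalization by evaluating at $x_0=0$, where only the $j=[n/2]$ terms survive and the series must collapse to the heat--flow output $2^{-m/2-k}P_k e^{-|\un x|^2/4}$ of the second step, and I would cross--check the lowest cases $k=0$, $n=0,1,2$ directly against the recursion $\un D(\alpha P_k)=2\alpha'\un x P_k$, $\un D(\un x P_k)=-(m+2k)P_k$ to confirm the combinatorial coefficients.
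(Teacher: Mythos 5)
Your route is genuinely different from the paper's and, as written, it stops exactly where the real work begins. You apply the two factors of $V=e^{-x_0\un D}\circ e^{\frac{\Delta}{2}}$ in their given order: the heat step $e^{\frac{\Delta}{2}}\bigl(P_ke^{-\frac{x^2}{2}}\bigr)=2^{-m/2-k}P_k\,e^{-\frac{x^2}{4}}$ is correct (Hecke--Bochner, or the Gaussian mean value property of harmonic polynomials), but the Cauchy--Kowalevski extension you must then compute is that of $P_k$ times the \emph{wider} Gaussian $e^{-\frac{x^2}{4}}$. By rescaling, this yields, up to constants, $e^{-\frac{x^2}{4}}\sum_n\frac{(x_0/\sqrt2)^n}{n!}H_{n,m,k}(\un x/\sqrt2)P_k(\un x/\sqrt2)$, i.e.\ generalized Hermite polynomials in a rescaled variable attached to $e^{-\frac{x^2}{4}}$, whereas the target formula is organized in terms of the unrescaled $H_{n,m,k}(\un x)$ attached to $e^{-\frac{x^2}{2}}$ and carries no overall factor $2^{-m/2-k}$. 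Passing from one to the other is a Mehler-type re-expansion, a genuine identity among Laguerre polynomials, and this is precisely the ``resummation'' you defer: asserting that carrying it out ``produces the inner sum'' and proposing to check $x_0=0$ and low cases is a consistency check, not a derivation of the coefficients $\frac{(-1)^j}{2^jj!}\,u(n,j)!$. So the proposal has a real gap at its central step.

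The paper avoids all of this with one observation you are missing: $\un D$ and $\Delta$ are constant-coefficient operators in $\un x$, hence commute, so $V=e^{\frac{\Delta}{2}}\circ e^{-x_0\un D}$. One first quotes the Chapter III formula of \cite{DSS}, $e^{-x_0\un D}\bigl(P_ke^{-\frac{x^2}{2}}\bigr)=e^{-\frac{x^2}{2}}\sum_n\frac{x_0^n}{n!}H_{n,m,k}(\un x)P_k(\un x)$, which is essentially the definition of the $H_{n,m,k}$ and already has the correct Gaussian in place; one then uses that monogenic functions are harmonic on $\R^{m+1}$, so that on the image of $e^{-x_0\un D}$ one has $\Delta=-\partial_{x_0}^2$ and $e^{\frac{\Delta}{2}}$ collapses to the one-variable operator $e^{-\frac12\partial_{x_0}^2}$ acting termwise on $x_0^n/n!$. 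The inner sum in the statement is then read off directly as $e^{-\frac12\partial_{x_0}^2}(x_0^n/n!)$, with no re-expansion of Hermite polynomials and no normalization to track. If you wish to keep your order of operations you must actually prove the re-expansion identity; otherwise, insert the commutation and the harmonicity step and the proof reduces to three lines.
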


\begin{proof}
From Chapter III of \cite{DSS}, we have
$$
e^{-x_0\un D} \left(P_k e^{-\frac{x^2}{2}}\right)(x_0,\un x) 
= e^{-\frac{x^2}{2}}\sum_{n=0}^\infty \frac{x_0^n}{n!} H_{n,m,k}(\un x) P_k(\un x).
$$
Since the operator 
$\un D$ commutes with the Laplace operator on $\R^m$, $\Delta$, we have
$$
V = e^{-x_0\un D} \circ e^{\frac{\Delta}{2}} = e^{\frac{\Delta}{2}} \circ  e^{-x_0\un D}.
$$
On the other hand, since monogenic functions are harmonic we have
$$
\Delta f = -\frac{\partial^2f}{\partial x_0^2},
$$
for $f$ monogenic on $\R^{m+1}$. Therefore,
$$
V\left(P_k e^{-\frac{x^2}{2}}\right) = e^{-\frac12 \frac{\partial^2}{\partial x_0^2}} 
\left(e^{-\frac{x^2}{2}}\sum_{n=0}^\infty \frac{x_0^n}{n!} H_{n,m,k}(\un x) P_k(\un x)\right)=
\left(e^{-\frac{x^2}{2}}\sum_{n=0}^\infty \left(e^{-\frac12 \frac{\partial^2}{\partial x_0^2}} 
 \frac{x_0^n}{n!}\right) H_{n,m,k}(\un x) P_k(\un x)\right)
$$
which proves the proposition.
\end{proof}

\section{{A unitary transform from  $L^2({\T}^{m}, d^mx) \otimes \C_{m}$ to a Hilbert space of monogenic functions on $\R\times \T^{m}$}}
\label{s-4}

In this section, we generalize the coherent state transform of the last section to a transform on $\C_{m}$-valued 
$L^2$ functions on the compact Lie group defined by the $m$-dimensional torus
$$
\T^m = \R^m/\Z^m,
$$
where $x\sim  x+2\pi k, k\in \Z^m,  x\in \R^m.$ We will still denote by $(x_1,\dots,x_m)$ the periodic 
coordinates on $\T^m$, 
with $x_j\in [0,2\pi], j=1,\dots, m$. Note that the definitions of the Dirac ($\tilde D$), Cauchy-Riemann ($D$) and $\un D$ operators
generalize straightforwardly. Likewise, the Cauchy-Kowaleski extension of section \ref{ss-ca} is obtained from the same 
expression. Let $\Delta$ denote the Laplacian on $\T^m$ with respect to an invariant metric and 
let $dx$ denote the 
unit volume Haar measure.

We then define the operator $V$ by the following diagram

\begin{align}
 \label{d31}
\begin{gathered}
\xymatrix{
&&  \mathcal{M} (\T^{m}\times \R) \\
L^2(\T^m, d x)\otimes \C_{m}  \ar@{^{(}->}[rr]_{e^{\frac{\Delta}2}}  \ar@{^{(}->}[rru]^{V} && \widetilde \Aa (\T^m)\otimes \C_{m},
\ar[u]_{\, e^{-x_0 \un D}}
  }
\end{gathered}
\end{align}

where $\Aa (\T^m)$ denotes the space of real analytic functions on $\T^m$ and $\mathcal{M} (\T^{m}\times \R)$ 
is the space of $\C_{m}$-valued monogenic functions on $\T^m\times \R$. 

As in the previous section, we obtain an explicit expression for this operator using the Fourier, or Peter-Weyl, 
decomposition for functions in $L^2(\T^m,d x)$. For $k\in \Z^m$, define $\un k = \sum_{j=1}^m k_je_j\in \R_m^1.$

\begin{proposition}
\label{ee-scst2}
Let $f = \sum_A \, f_A \, e_A \, \in L^2(\T^m, \un dx) \otimes \C_{m}$, with Fourier decomposition
$$
f(\un x)= \frac 1{(2\pi)^{m/2}} \, \sum_{k\in \Z^m}  \, f_k e^{i (\un k, \un x)} \,  \,
=  \frac 1{(2\pi)^{m/2}} \, \sum_{k\in \Z^m} \, e^{i (\un k, \un x)} \,  \sum_A  f_{k,A} \,   e_A \,  
\, \,   \, . 
$$
Then,
\ba
\label{ecthar2}
\nonumber &&  
 V(f)(x_0,\un x)=  \frac 1{(2\pi)^{m/2}} \, \sum_{k\in \Z^m}  \, e^{- \frac {|k|^2}2} \, e^{i ((\un k, \un x) - x_0 \un k)} \,   f_m.
\ea
\end{proposition}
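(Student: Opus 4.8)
The plan is to mimic the proof of Lemma \ref{ee-scst} line by line, with the Fourier integral over $\R^m$ replaced by the Peter--Weyl series over the dual lattice $\Z^m$. Since $V=e^{-x_0\un D}\circ e^{\frac{\Delta}{2}}$ as displayed in the diagram (\ref{d31}), it suffices to compute the action of each of the two operators on a single character $e^{i(\un k,\un x)}$ and then recombine, inserting the Clifford coefficients $f_k=\sum_A f_{k,A}\,e_A$ (so that the $f_m$ in the statement is read as $f_k$).

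First I would record that the characters $e^{i(\un k,\un x)}$, $k\in\Z^m$, are eigenfunctions of the invariant Laplacian on $\T^m$, with $\Delta\,e^{i(\un k,\un x)}=-|k|^2\,e^{i(\un k,\un x)}$; hence the heat operator acts diagonally, $e^{\frac{\Delta}{2}}\,e^{i(\un k,\un x)}=e^{-\frac{|k|^2}{2}}\,e^{i(\un k,\un x)}$, exactly as in the noncompact case of Lemma \ref{ee-scst}. Next, the identity $\un D\,e^{i(\un k,\un x)}=i\,e^{i(\un k,\un x)}\,\un k$ holds verbatim on the torus, so the Cauchy--Kowalevski extension $e^{-x_0\un D}$ reproduces the monogenic exponential $e^{i((\un k,\un x)-x_0\un k)}$ of Proposition \ref{pu2}, now with $\un k\in\R_m^1$ in place of $\un p$. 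Applying the two operators in succession to each character therefore yields the summand $e^{-\frac{|k|^2}{2}}\,e^{i((\un k,\un x)-x_0\un k)}\,f_k$, which is precisely the general term of the asserted series.

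The only point genuinely requiring care is the justification that $V$ may be applied term by term to the Fourier series of $f$. Here I would invoke the smoothing effect of the heat semigroup: for $f\in L^2(\T^m,dx)\otimes\C_m$ the coefficients $\{f_k\}$ are square-summable and in particular bounded, so after applying $e^{\frac{\Delta}{2}}$ the Gaussian damping $e^{-|k|^2/2}$ forces the resulting series, together with all of its $\un x$-derivatives, to converge absolutely and uniformly on $\T^m$; thus $e^{\frac{\Delta}{2}}f$ is real analytic. The subsequent Cauchy--Kowalevski extension contributes the Clifford factor $e^{-ix_0\un k}$ of (\ref{be}), which grows only like $e^{|x_0|\,|k|}$, so the combined weight $e^{-|k|^2/2+|x_0|\,|k|}$ still decays super-exponentially in $|k|$ and the full series converges uniformly on compact subsets of $\R\times\T^m$, as in Proposition \ref{initial}. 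This uniform convergence legitimizes the interchange of the operators $e^{\frac{\Delta}{2}}$ and $e^{-x_0\un D}$ with the infinite sum, and establishes the stated expression for $V(f)$.
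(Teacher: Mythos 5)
Your proposal is correct and follows essentially the same route as the paper, which simply declares the proof ``analogous'' to the $\R^m$ case of Lemma \ref{ee-scst}: diagonal action of $e^{\frac{\Delta}{2}}$ on characters, the Cauchy--Kowalevski extension producing $e^{i((\un k,\un x)-x_0\un k)}$, and term-by-term application justified by the Gaussian decay of $e^{-|k|^2/2}$ against the at-most-exponential growth of $e^{-ix_0\un k}$. You supply the convergence details the paper omits, and you correctly read the $f_m$ in the statement as a typo for $f_k$.
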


\begin{proof}
The proof is analogous to the one of Lemma \ref{l4}.
\end{proof}

\begin{remark}We note the following useful formula,
\ba    
 \nonumber && V(f)(x_0, \un x)  =   \frac 1{(2\pi)^{m/2}}  \, \sum_{k\in \Z^m} \, e^{- \frac {|\un k|^2}2} \, e^{i (\un k, \un x) } \,\left(
\cosh(x_0 |\un k|)-i\sinh(x_0 |\un k|) \, \frac{\un k}{|\un k|}\right)  \,  f_k.   
\ea
\end{remark}

Consider now the measure on $\R^{m}\times \R$  given by
$$
d\mu   =  {\frac{1}{\sqrt{\pi} }} \,    \, {e^{-  x_0^2}} \, dx_0 dx ,
$$
and let $\mathcal{M}L^2 (\T^{m}\times \R, d\mu)$ be the corresponding Hilbert space of $L^2$ monogenic functions.

The analog of Theorem \ref{bigT} is now
\begin{theorem}
\label{th-scst2}
The map $V$ in diagram (\ref{d31}) is unitary with respect to the measure $d\mu$, 
 i.e. the map
$V$ in the diagram
\begin{align}
 \label{d3333}
\begin{gathered}
\xymatrix{
&&  \mathcal{M}L^2 (\T^{m}\times \R, d\mu)  \\
L^2(\T^m, d x)\otimes \C_{m}  \ar@{^{(}->}[rr]_{e^{\frac{\Delta}2}}  \ar[rru]^{V} && \widetilde \Aa (\T^m)\otimes 
\C_{m}
\ar[u]_{\, e^{-x_0 \un D}}
  }
\end{gathered}
\end{align}
is a unitary isomorphism.
\end{theorem}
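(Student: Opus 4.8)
The plan is to follow the same architecture that established Theorem \ref{bigT}, adapting each step from the continuous Fourier transform on $\R^m$ to the discrete Fourier (Peter--Weyl) expansion on $\T^m$. First I would verify the analog of Lemma \ref{l1}, namely that $V(f)$ is monogenic on $\T^m\times\R$. Here the sum $\sum_{k\in\Z^m} e^{-|k|^2/2}\, e^{i((\un k,\un x)-x_0\un k)} f_k$ converges uniformly together with all its derivatives in $x_0$ and $\un x$ on compact sets, thanks to the Gaussian factor $e^{-|k|^2/2}$ which dominates any polynomial growth coming from differentiation; this legitimizes termwise application of $\partial_{x_0}+\un D$, and since each term is the Cauchy--Kowalevski monogenic extension of $e^{i(\un k,\un x)}f_k$ (as in Proposition \ref{pu2}), the whole series is monogenic.

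Next I would prove the isometry statement, which is the core computation replacing Lemma \ref{l4}. Using the expression in Proposition \ref{ee-scst2}, I would compute $\langle V(f),V(h)\rangle_{L^2(\T^m\times\R,d\mu)\otimes\C_m}$ by inserting the two series and integrating. The key simplification is that the $\un x$-integral over $\T^m$ (with the unit Haar measure) gives $\int_{\T^m} e^{i(\un k-\un l,\un x)}dx=\delta_{kl}$, collapsing the double sum to a single sum over $k\in\Z^m$ exactly as the $\R^m$ case used $\int_{\R^m} e^{i(\un p-\un q,\un x)}d\un x=(2\pi)^m\delta(\un p-\un q)$. After this collapse the calculation is formally identical to Lemma \ref{l4}: one uses the antisymmetry $(\un k\, u,v)=-(u,\un k\, v)$ from \eqref{ee-ip1v}, writes $e^{-2ix_0\un k}=\cosh(2x_0|\un k|)-i\sinh(2x_0|\un k|)\un k/|\un k|$, and notes that the $\sinh$ term contributes $(\un k\,\wh f,\wh h)$ which is purely imaginary-odd and integrates to zero against the even weight $e^{-x_0^2}$. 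The remaining $x_0$-integral $\frac{1}{\sqrt\pi}\int_\R e^{-|k|^2}\cosh(2x_0|\un k|)e^{-x_0^2}dx_0 = 1$ kills the Gaussian factor, yielding $\sum_{k\in\Z^m}(f_k,h_k)=\langle f,h\rangle_{L^2(\T^m)\otimes\C_m}$ by Parseval. Since trigonometric polynomials are dense in $L^2(\T^m)$, $V$ extends to an isometry with closed image.

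Finally I would prove surjectivity onto $\mathcal{M}L^2(\T^m\times\R,d\mu)$. The argument parallels the proof of Theorem \ref{bigT}: restriction to $x_0=0$ determines a monogenic function uniquely (Proposition \ref{initial}), and any $g\in L^2(\T^m)\otimes\C_m$ restriction expands in the Fourier basis $\{e^{i(\un k,\un x)}e_A\}$. One then recovers the full monogenic function through the Cauchy--Kowalevski series $e^{-x_0\un D}$, observes that each partial sum lies in the closed image of $V$, and shows that the $d\mu$-square-integrability condition on $\R^m\times\R$ translates via the isometry back into square-summability of the Fourier coefficients on $\T^m$, i.e. into $L^2(\T^m)\otimes\C_m$ membership of the boundary data.

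I expect the main obstacle to be a mild but genuine one of rigor rather than of new ideas: justifying the interchange of summation and integration in the isometry computation, and the interchange of summation with the differential operators in the monogenicity step. On $\R^m$ the Gaussian in $\un p$ gave Schwartz-class control uniformly in $x_0$; on $\T^m$ the discrete Gaussian $e^{-|k|^2/2}$ plays the same role, but I would want to confirm that the partial sums converge in $\mathcal{M}L^2(\T^m\times\R,d\mu)$ and not merely pointwise, so that the isometry passes cleanly to the full series and the closed-image conclusion is valid. The surjectivity direction also needs the observation, which the author invokes only implicitly, that on $\T^m$ the relevant orthogonal boundary data are the Fourier modes $e^{i(\un k,\un x)}$ rather than the Hermite-type functions $\varphi_k$ used on $\R^m$; tracking the precise correspondence between $d\mu$-norms on space-time and coefficient $\ell^2$-norms is where the bookkeeping must be done carefully, even though no step is conceptually harder than in Theorem \ref{bigT}.
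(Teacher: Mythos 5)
Your proposal matches the paper's proof: the isometry is established by exactly the computation you describe (Peter--Weyl orthogonality collapsing the double sum, the $\cosh$/$\sinh$ splitting with the $\sinh$ term vanishing by oddness in $x_0$, and the Gaussian integral equal to $1$), and the paper likewise disposes of surjectivity by declaring it analogous to Theorem \ref{bigT}. Your added remarks on justifying the interchanges and on the Fourier modes replacing the Hermite functions are sensible refinements of the same argument, not a different route.
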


\begin{proof}
Then, for $f,h \in L^2(\T^m) \otimes \C_{m}$, with $f=\sum_A f_A e_A, h=\sum_Ah_A e_A$ we have
\bas
&& \langle V(f), V(h)\rangle_{L^2(\T^m) \otimes \C_{m}} = \\
&=&
\frac 1{\sqrt{\pi}(2\pi)^{m}}
  \int_{\T^{m}\times \R}  \sum_{k,k'\in \Z^m} \, 
e^{i (\un k - \un k', \un x)} \, e^{- \frac{|k|^2+|k'|^2}2} \,  \left(e^{-ix_0 \un k} \, f_k,
e^{-ix_0 \un k'} \, h_{k'}\right)
\,  e^{-x_0^2} \, 
dx_0 d\un x  = \\
&=&
\frac 1{\sqrt{\pi}} \int_{\R}\sum_{k\in \Z^m}
  \, 
 \, e^{- {|k|^2}} \,  \left(e^{-ix_0 \un k} \, f_k,
e^{-ix_0 \un k} \, h_k\right)
\,  e^{-x_0^2} \, 
dx_0  = \\
&=&
\frac 1{\sqrt{\pi}}\int_{\R}\sum_{k\in \Z^m}
   \, 
 \, e^{- {|k|^2}} \,  \left(e^{-2ix_0 \un k} \, f_k,
 \, h_k\right)
\,  e^{-x_0^2} \, 
dx_0  = \\
&=&
\frac 1{\sqrt{\pi}} \int_{\R}\sum_{k\in \Z^m}
   \, 
 \, e^{- {|k|^2}} \,  \left[\cosh(2x_0|\un k|) \left(f_k, \, h_k\right)
-i \frac{\sinh(2x_0|\un k|)}{|\un k|}  \, \left(\un k \, f_k, \, h_k\right)\right]
\,  e^{-x_0^2} \, 
dx_0  = \\
&=&
\frac 1{\sqrt{\pi}}\int_{\R}\sum_{k\in \Z^m}
   \, 
 \, e^{- {|k|^2}} \,  \cosh(2x_0|\un k|) \left(f_k, \, h_k\right)
\,  e^{-x_0^2} \, 
dx_0  = \\
&=&
  \sum_{k\in \Z^m} \, 
 \, \left(f_k, \, h_k\right)  = \\
&=& \langle f, h\rangle_{L^2(\T^m,dx)\otimes\C_m}.
\eas
The proof of ontoness is analogous to the one in the proof of Theorem \ref{bigT}.
\end{proof}




\section{Quantum mechanical interpretation}
\label{s-6}

As is well known, the Schr\"odinger representation in quantum mechanics is the one for which the position 
operator $\hat x$ acts by multiplication on $L^2(\R^m,dx)$. The momentum operator is then given by 
$$
\hat p_j = i \frac{\partial}{\partial x_j},\,\, j=1,\dots, m.
$$

The CST from Section \ref{cst} intertwines the Schr\"odinger 
representation with the Segal-Bargmann representation, on which
the operators $\hat x_j+i\hat p_j$ acts as the operator of multiplication 
by the  holomorphic function $x_j+ip_j$ (see Theorem 6.3 of \cite{Ha2}),

\be\label{cstacts}
\left( U\circ (\hat x_j +i\hat p_j) \circ U^{-1}\right)(f)(x,p)=(x_j+ip_j)f(x,p), \, j=1,\dots, m.
\ee

We will prove now the analogous result for the coherent state transform of section \ref{s-3}.



\begin{theorem}
\label{th-scts} 
The unitary map $V$ induces a representation of the observable $\un x+i\un p$ on the Hilbert space 
of monogenic functions ${\mathcal M}L^2(\R^{m+1},d\mu)$ given by
\be\label{scstacts}
\left (V \circ ({\hat{\un x}} + i {\hat{\un p}}) \circ V ^{-1}\right)(f)=
\left(e^{-x_0 \un D} \circ \un x \right) \,(f_0),
\ee
where on the right hand side the operator $\un x$ acts by Clifford multiplication on the left and
$f_0(\un x) = f(x_0=0, \un x)$. 
\end{theorem}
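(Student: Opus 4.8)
The plan is to make the conjugation $V\circ(\hat{\un x}+i\hat{\un p})\circ V^{-1}$ completely explicit by expressing every operator in terms of the three building blocks of $V$, namely $e^{\frac{\Delta}{2}}$, $e^{-x_0\un D}$, and left Clifford multiplication. First I would fix the convention of this section, $\hat p_j = i\partial_{x_j}$, so that $\hat{\un p}=\sum_j \hat p_j e_j = i\un D$ and hence
$$
\hat{\un x}+i\hat{\un p} = \un x - \un D ,
$$
where $\un x$ denotes left multiplication by $\sum_j x_j e_j$. Next I would record the explicit inverse of $V$. Since $V=e^{-x_0\un D}\circ e^{\frac{\Delta}{2}}$ and $e^{-x_0\un D}$ restricts to the identity at $x_0=0$, evaluating $V(\varphi)$ on the hyperplane $x_0=0$ gives $e^{\frac{\Delta}{2}}\varphi$; by the uniqueness of the Cauchy--Kowalevski extension (Proposition \ref{initial}) this yields $V^{-1}(f)=e^{-\frac{\Delta}{2}}(f_0)$.

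The computational heart of the argument is a single intertwining identity,
$$
e^{\frac{\Delta}{2}}\circ \un x\circ e^{-\frac{\Delta}{2}} = \un x + \un D .
$$
I would prove it by a Baker--Campbell--Hausdorff expansion applied coordinate-wise: from $\Delta(x_j h)=x_j\Delta h + 2\partial_{x_j}h$ one gets $[\tfrac12\Delta,\,x_j]=\partial_{x_j}$ and $[\tfrac12\Delta,\,\partial_{x_j}]=0$, so the series terminates after one term and $e^{\frac{\Delta}{2}}x_j e^{-\frac{\Delta}{2}}=x_j+\partial_{x_j}$. Since $e^{\frac{\Delta}{2}}$ acts componentwise on $\C_m$-valued functions it commutes with left multiplication by the constants $e_j$, and summing $(x_j+\partial_{x_j})e_j$ over $j$ gives the displayed identity. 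I would also note that $\un D$ commutes with $V$, since it commutes with $\Delta$ (hence with $e^{\frac{\Delta}{2}}$) and trivially with $e^{-x_0\un D}$.

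With these in hand the computation is short. Given $f\in\mathcal{M}L^2(\R^{m+1},d\mu)$, put $g_0=V^{-1}(f)=e^{-\frac{\Delta}{2}}f_0$ and split
$$
V\big((\un x-\un D)g_0\big)=V(\un x\,g_0)-V(\un D\,g_0).
$$
Using the intertwining identity, $V(\un x\,g_0)=e^{-x_0\un D}e^{\frac{\Delta}{2}}\big(\un x\,e^{-\frac{\Delta}{2}}f_0\big)=e^{-x_0\un D}\big((\un x+\un D)f_0\big)$, whereas the commutation of $\un D$ with $V$ gives $V(\un D\,g_0)=e^{-x_0\un D}(\un D f_0)$. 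The two $\un D f_0$ contributions cancel, leaving $e^{-x_0\un D}(\un x f_0)=\big(e^{-x_0\un D}\circ\un x\big)(f_0)$, which is exactly the right-hand side of (\ref{scstacts}).

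The one point that requires care, and the main obstacle, is rigor for these unbounded operators: the BCH manipulation and the formula $V^{-1}=e^{-\frac{\Delta}{2}}\circ(\,\cdot\,)_0$ must be justified on a dense invariant domain. I would carry out the argument on the span of the basis elements $\psi_k e_A$ of Corollary \ref{basis} (equivalently the $V$-images of the $\varphi_k e_A$), where $f_0$ is a Gaussian times a polynomial, every operator above is well defined, all the commutator manipulations are finite sums, and the resulting identity then extends to the natural domain of $\hat{\un x}+i\hat{\un p}$ by the unitarity of $V$ established in Theorem \ref{bigT}.
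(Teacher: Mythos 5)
Your proof is correct, and it rests on the same mechanism as the paper's: the fact that $e^{\frac{\Delta}{2}}$ intertwines the creation-type operator $\hat{\un x}+i\hat{\un p}$ with left multiplication by $\un x$, after which one simply composes with the Cauchy--Kowalevski extension $e^{-x_0\un D}$. The execution differs in presentation. The paper works on $\Ss(\R^m)\otimes\C_m$ in Fourier space: it writes $(\hat{\un x}+i\hat{\un p})h$ as a Fourier integral and invokes Hall's identity $e^{\frac{\Delta}{2}}\bigl((\un x+i\un p)\,e^{i(\un p,\un x)}\bigr)=\un x\, e^{\frac{\Delta}{2}}e^{i(\un p,\un x)}$ (which, with the section's convention $\hat p_j=i\partial_{x_j}$, is really the statement that the integrand $(\un x-i\un p)e^{i(\un p,\un x)}$ produced by $\hat{\un x}+i\hat{\un p}$ is intertwined with $\un x$ -- the sign in the paper's display only works out with the right convention, a point your treatment makes cleaner). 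You instead derive the same identity in position space as the terminating commutator expansion $e^{\frac{\Delta}{2}}x_je^{-\frac{\Delta}{2}}=x_j+\partial_{x_j}$, split $\hat{\un x}+i\hat{\un p}=\un x-\un D$, and cancel the two $\un D f_0$ terms; you also make $V^{-1}=e^{-\frac{\Delta}{2}}\circ(\,\cdot\,)_0$ explicit, which the paper leaves implicit. What your route buys is self-containedness (no appeal to Hall's Theorem 6.3) and an explicit dense invariant domain (the span of the $\psi_k e_A$) on which every unbounded operator manipulation is a finite algebraic computation; what the paper's route buys is brevity and a direct link to the classical Segal--Bargmann statement (\ref{cstacts}). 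Both arguments close in the same way, by density and the unitarity of $V$ from Theorem \ref{bigT}.
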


\begin{proof}
Let $h = \sum_A \, h_A \, e_A \, \in \Ss(\R^m)\otimes \C_m$ and let $f=V(h)$. 
Then,
$$
({\hat{\un x}} + i {\hat{\un p}})(h)(\un x) =  \left(\frac 1{(2\pi)^{m/2}} \int_{\R^m}  (\un x +i\un p)\, e^{i (\un p, \un x) } \,  \wh h(\un p) \, d\un p  \right).
$$
From the above result of Hall (see also \cite{Ha2}),
$$
e^{\frac{\Delta}{2}}((\un x + i\un p) e^{i (\un p, \un x) }) = \un x e^{\frac{\Delta}{2}}e^{i (\un p, \un x) },
$$
from which the result follows from the denseness of $\Ss(\R^m)\otimes \C_{m}$ in $L^2(\R^m,d\un x)\otimes \C_{m}$.
\end{proof}

\begin{remark}Notice that the Segal-Bargmann transform can be expressed as, from (\ref{ee-aco}),
$$
e^{i\sum_{k=1}^m y_k\frac{\partial}{\partial x_k}}\circ e^{\frac{\Delta}{2}}
$$
while the transform $V$ of Section \ref{s-3} is given by
$$
e^{-\sum_{k=1}^m x_0\,e_k\frac{\partial}{\partial x_k}}\circ e^{\frac{\Delta}{2}}.
$$ 
We therefore see that $V$ is obtained from the Segal-Bargmann transform by replacing, in the operator of analytic continuation (\ref{ee-aco}), the momentum variables $y_k$ by the non-commutative variables $-ix_0 e_k$, where $x_0$ 
is the euclidean time.
\end{remark}

\bigskip
\bigskip

{\bf \large{Acknowledgements:}} 
JM and JPN thank Pedro Gir\~ao and Jorge Silva for helpful
discussions. 

The authors were partially
supported by Macau Government FDCT through the project 099/2014/A2,
{\it Two related topics in Clifford analysis},
and by the University of Macau Research Grant  MYRG115(Y1-L4)-FST13-QT. 
 JM and JPN were also partly supported by
FCT/Portugal through the projects UID/MAT/04459/2013, 
EXCL/MAT-GEO/0222/2012, PTDC/MAT-GEO/3319/2014 and by the (European Cooperation in Science and 
Technology) COST Action MP1405 QSPACE.

\end{document}